\documentclass[12pt]{article}   

\usepackage[latin9]{inputenc}
\usepackage[margin=0.75in]{geometry}
\usepackage{amsthm}
\usepackage{amsmath}
\usepackage{amssymb}
\usepackage{amsrefs}
\usepackage{graphicx}
\usepackage{esint}
\usepackage{hyperref}
\usepackage{subfigure}

\usepackage{fourier}
\usepackage{xcolor} 
\usepackage{epstopdf}

\newcommand{\R}{{\mathbb{R}}}

\newcommand{\N}{{\mathbb{N}}}

\def\le{\leqslant}
\def\ge{\geqslant}

\theoremstyle{plain}
\newtheorem{lemma}{Lemma}[section]

\newtheorem{theo}{Theorem}[section]

\newtheorem{cor}{Corollary}[section]
\theoremstyle{remark}

\theoremstyle{remark}

\theoremstyle{remark}
\newtheorem{definition}{\bf{Definition}}[section]

\title{Explicit and implicit TVD schemes for conservation laws with Caputo derivatives}
\author{Jian-Guo Liu\thanks{Department of Mathematics and Department of Physics, Duke University, Box 90320, Durham NC 27708, USA (jliu@phy.duke.edu)}, \, \, Zheng Ma\thanks{Department of Mathematics,
Shanghai Jiao Tong University,
800 Dongchuan RD, Shanghai, 200240, China (mayuyu@sjtu.edu.cn)}, \, \, Zhennan Zhou\thanks{Department of Mathematics, Duke University, Box 90320, Durham NC 27708, USA (zhennan@math.duke.edu)}}
\date{} 

\begin{document}
\maketitle

\begin{abstract}
In this paper, we investigate numerical approximations of  the scalar conservation law with the Caputo derivative, which introduces the memory effect.  We construct the first order and the second order explicit upwind schemes for such equations, which are shown to be conditionally $\ell^1$ contracting and TVD. However, the Caputo derivative leads to the modified CFL-type stability condition, $ (\Delta t)^{\alpha} = O(\Delta x)$, where $\alpha \in (0,1]$ is the fractional exponent in the derivative. When $\alpha$ small, such strong constraint makes the numerical implementation extremely impractical. We have then proposed the implicit upwind scheme to overcome this issue, which is proved to be unconditionally $\ell^1$ contracting and TVD. Various numerical tests are presented to validate the properties of the methods and provide more numerical evidence in interpreting  the memory effect in conservation laws. 
  
\end{abstract}

\section{Introduction}

In this paper, we study numerical approximations to the scalar conservation law with Caputo derivatives. The governing equation is  the following conservation law with fractional time derivative:
\begin{equation}\label{eq:main}
\partial_t^\alpha u(x,t) + f(u(x,t))_x = 0, \quad  x\in\R, \,\, t>0,
\end{equation}
where $u(x,t)$ is a certain density function or concentration, and $f(u)$ is the flux function. This equation is completed by an initial condition:
\begin{equation}
u(x, 0) = u_0(x), \quad x\in\R.
\end{equation}
Here, when fractional exponent $\alpha\in (0,1)$, $\partial_t^\alpha$ denotes the Caputo derivative:
\[
\partial_t^\alpha u(t)=C_\alpha \int_0^t (t-s)^{-\alpha} \partial_s u(s)ds,
\]
where $C_\alpha=1/\Gamma(1-\alpha)$. When $\alpha=1$, $\partial_t^\alpha u (t)$ is the standard time derivative $\partial_t u(t)$. 

The Caputo derivative was introduced in \cite{Caputo} for diffusion of fluids in porous media with the memory effect.  It has been shown that the Caputo derivative is effective in modeling plasma transport (see \cites{Plasma1,Plasma2}), and fractional functions have been made use of to construct physics models for long--range or nonlocal interactions in space and time, see for e.g., \cites{MeltzlerKlafter,Zaslavsky}. Recently in \cite{Caffarelli},  Allen, Caffarelli and Vasseur proved the existence of the weak solutions and the H\"older continuity for such solutions when with both a fractional potential pressure and fractional time derivative. The scalar conservation law with Caputo derivatives \eqref{eq:main} models the time evolution of a distribution of interest with general flux function and memory effect.  However, while the physical interpretation is clear, not much mathematical research has been done for such equations.

In the aspect of numerical approximation, many numerical methods have been introduced and analyzed for ODE's or diffusion equations with the Caputo derivative, see e.g., \cites{Xu:2013jjba,Xu:2007eaba,Zhao:2015gfba,Cao2013154,Kumar20062602}, and most of which can be extended to fractional space-time advection-diffusion equations (see \cite{LZATB}) with additional treatment in the space (fractional) derivatives.  Also, there have been a few worthy studies in numerical approximations of fractional space-time advection-dispersion equation, see \cite{ZLPM}. It has been shown in different equations that, a consistent discretization of   the Caputo derivative will introduce dissipation in time, which can help stabilize a numerical scheme. Therefore, at least for linear equations, the Caputo derivative does not cause extra challenges in numerical approximations. In \cite{Zhao:2015gfba}, Zhao, Sun and Karniadakis proposed  a numerical method to approximate viscous Burgers' equation, however the methods suffered from the Gibbs phenomenon at jumps since the Fourier collocation method was used for the space discretization. To our best knowledge, this was still the only successful attempt for nonlinear conservation laws, although the presence of the diffusion terms helped avoid the issue of weak solution.

In this paper, we aim to construct and analyze explicit and implicit upwind schemes for the scalar conservation law with the Caputo derivative  \eqref{eq:main}. We propose the first order and the second upwind schemes to equation \eqref{eq:main}, and show that with modified CFL conditions, the numerical schemes are TVD. However, the modified CFL conditions are becoming more restricted as $\alpha \rightarrow 0$, which makes the explicit schemes not feasible for small $\alpha$. Motivated by this, we further design an implicit upwind method for the conservation law which is shown to be $\ell^1$ contracting and thus TVD. And in particular, for the linear advection case, we also show that the implicit scheme is also energy stable and satisfies the entropy condition.

The rest of the paper is outline as follows. We summary some preliminary knowledge of the scalar conservation law with the Caputo derivative in the second part of this section. In Section 2, we briefly summarize existing results on numerical approximation of the Caputo derivative. We first propose and show stability analysis of the first order and the second order explicit upwind scheme in Section 3, which is followed by the introduction of an implicit upwind scheme to avoid the CFL conditions. We carry out various numerical tests in Section 4, not only to verify the properties of the proposed schemes, but also investigate the interpretations of the Caputo derivative  by conducting some designed tests.

\subsection{Preliminaries}
In this part, we start by introducing the Mittag-Leffler function for the time fractional linear advection equation and conclude with discussions on weak solutions to the nonlinear conservation law with the Caputo derivative. We consider the following equation with fractional time derivative
\begin{equation}\label{eq:FT}
\partial_t^\alpha u = -A u,\quad u(x,0)=g(x).
\end{equation}
where $A$ is an operator to be specified.
By the standard Laplace transformation,
\[
\mathcal L u(s):=\hat u(s)= \int_0^\infty e^{-st} u(t)dt,
\]
the equation \eqref{eq:FT} becomes
\[
(s^\alpha + A)\hat u = s^{\alpha-1} g. 
\]
Application of the Laplace inversion gives
\begin{equation}
u(t)=E_\alpha (-t^\alpha A) g,
\end{equation}
where $E_\alpha(z)$ is the Mittag-Leffler function
\begin{equation}
E_\alpha(z)=\sum_{n=0}^\infty \frac{z^n}{\Gamma (\alpha n+1)}.
\end{equation}
If we set $A = -a\partial_x$, where $a$ is a constant,  the corresponding advection equation reads,
\begin{equation}
\partial_t^\alpha u + a\partial_x u = 0,
\end{equation}
and its solution can be represented as
\begin{equation}
u(x,t) = \sum_{n=0}^\infty \frac{\partial_x^{n} g}{\Gamma(\alpha n + 1)}a^n t^{\alpha n}.
\end{equation}
Notice that, when $\alpha = 1$, the equation reduces to the normal scalar conservation law, and from the above equation we get nothing but
\begin{equation}
u(x,t) = f(x+at) = \sum_{n=0}^\infty \frac{\partial^n_x g}{n!} a^n t^n.
\end{equation}

However, for general conservation laws \eqref{eq:main}, there is lack of representations, and it's more convenient to work with its weak solutions due to the nonlinear flux. We give the following definition:
\begin{definition}\label{def_ws}
	$u(x,t)$ is a {\it weak solution} of the equation \eqref{eq:main} , if $\partial_t^\alpha u\in L^1_{\mathrm{loc}}(\R)$, $f(u)\in L^1_{\mathrm{loc}}(\R)$ and if for every test function $\phi \in \mathcal{D}(\R)$, we have
	\begin{equation}
	\int_\R \left(\partial_t^\alpha u \phi + f(u)\partial_x\phi\right)\,dx = 0.
	\end{equation}
\end{definition}
One can easily verify that the notion of a weak solution extends that of a classical solution: every classical solution of (\ref{eq:main}) is also a weak solution.

However, like standard conservation laws, the weak solutions to  \eqref{eq:main} may not be unique. To construct a Cauchy problem which admits more than one weak solution, we choose the fractional Burgers' equation
\begin{equation}\label{burgers}
	\partial_t^\alpha u + \partial_x\left(\frac{1}{2}u^2\right) = 0,
\end{equation}
with initial data 
\begin{equation}
	u(x,0) = \begin{cases}
		1, \quad & x > 0, \\
		-1, \quad & x < 0.
	\end{cases}
\end{equation}

First we claim that the initial data itself serves as one weak solution,  which means we have a static shock solution, (see Figure \ref{non_uniq} left)
\begin{equation}
u(x,t) = \begin{cases}
1, \quad & x > 0, \\
-1, \quad & x < 0.
\end{cases}
\end{equation}
We now verify that this solution satisfies definition of the weak solution. First, we have
\[
\partial_t^\alpha u \equiv 0 \quad\mbox{for } \forall x\ne 0,
\]
and we also have
\[
f(u) = \frac{1}{2}u^2 \equiv \frac{1}{2} \quad\mbox{for } \forall x \ne 0.
\]
According to {\bf Definition} (\ref{def_ws}), we have for every test function $\phi\in\mathcal{D}(\R)$
\[
\int_\R \left(\partial_t^\alpha u \phi + f(u)\partial_x\phi\right)\,dx = 0 + \frac{1}{2}\int_\R \phi_x\,dx = 0.
\]
So it's a weak solution of equation (\ref{burgers}). But obviously, this is not an entropy solution since the characteristics are moving outwards from the shock.

To construct another solution, we use the numerical method we will introduce later in this paper,  and the numerical result is plotted in Figure \ref{non_uniq} right for $t=0.02$ and $\alpha=0.8$. 
We see that, intuitively, the solution is between the static shock and the standard rarefaction solution, which manifests the memory effect. 
\begin{figure}[htbp]
	\includegraphics[width=0.5\textwidth]{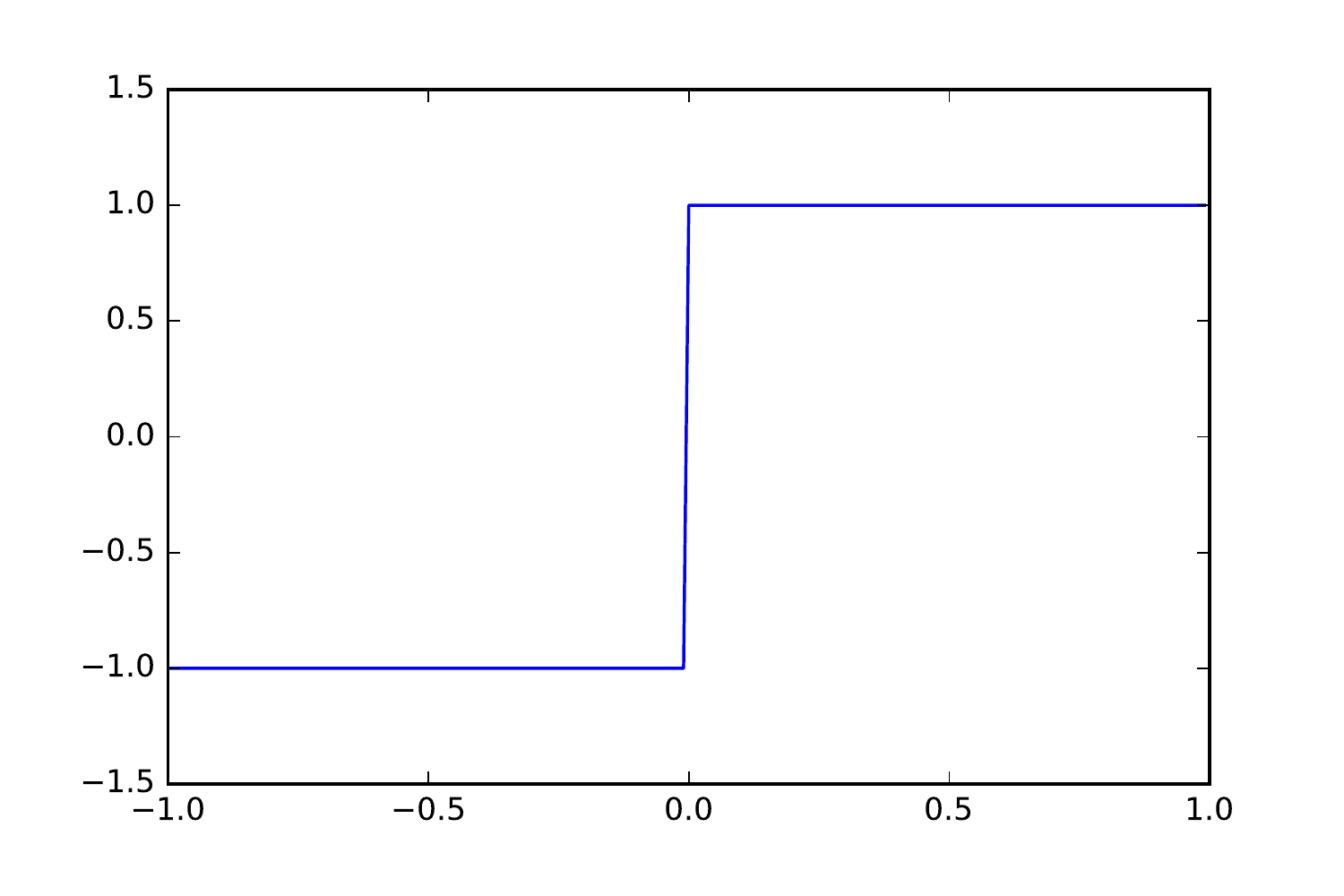}
	\includegraphics[width=0.5\textwidth]{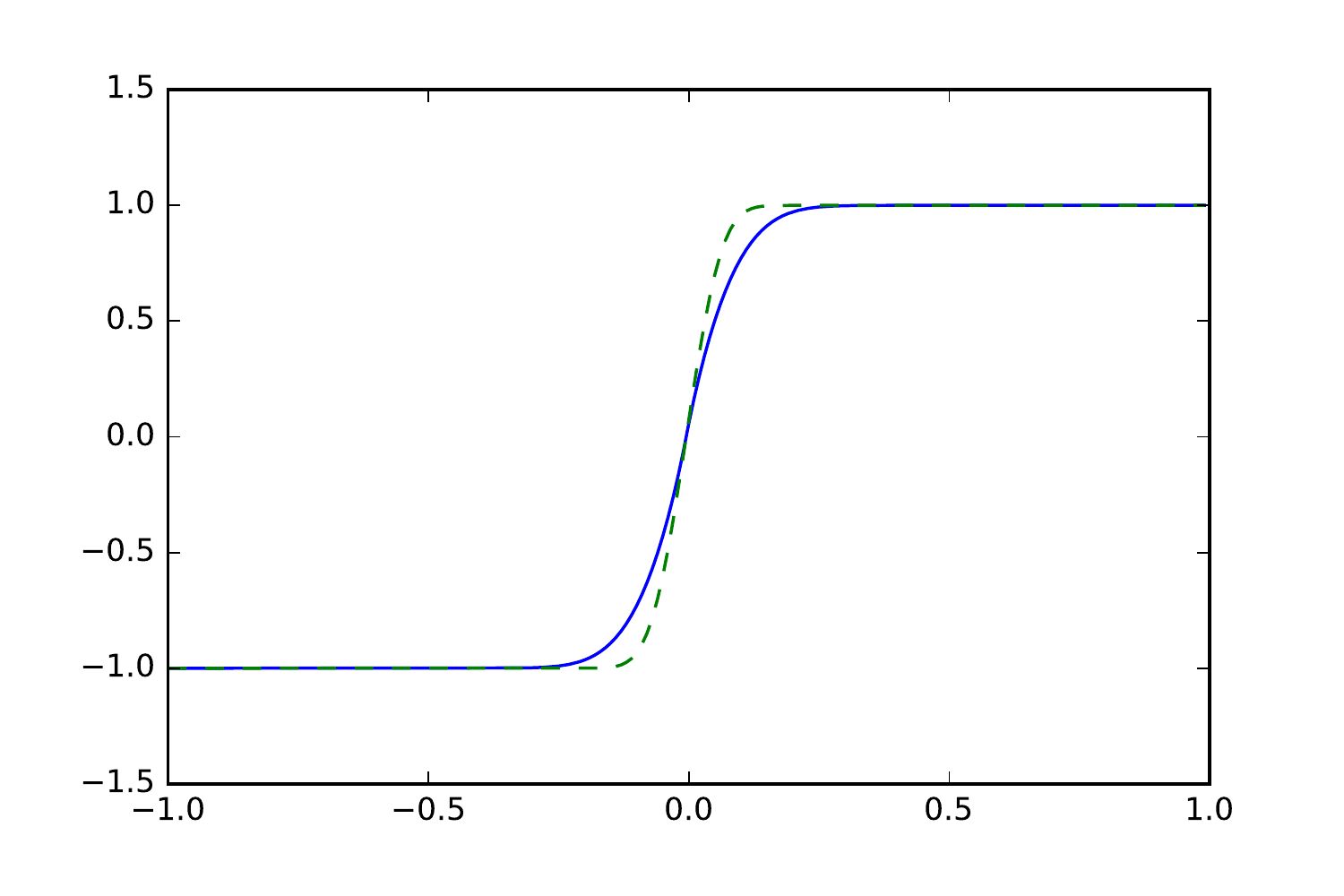}	
	\caption{Non-uniqueness of the solution. Left: the static shock solution. Right: the rarefaction solution with memory effect. Dashed line in the right: the rarefaction solution for the Burgers' equation with the standard time derivative.}
	\label{non_uniq}
\end{figure}

In order to define the entropy solutions, we consider the conservation law with artificial viscosity 
\begin{equation}\label{viscousity}
\partial_t^\alpha u + \partial_x f(u) = \varepsilon \partial_{xx} u.
\end{equation}
Here, $\varepsilon>0$ is a diffusion coefficient, $\varepsilon\ll 1$. The Cauchy problem for (\ref{viscousity}) can be shown to have one and only one classical solution $u^\varepsilon$ which satisfies the maximum principle. If the sequence $\{u^\varepsilon\}$ converges almost everywhere to a function $u$ when $\varepsilon\to 0$, we call that $u$ is an entropy solution of (\ref{eq:main}). 

We conclude this section by the following remark. It is entirely possible for one to define the entropy solutions by constructing the convex entropy function and entropy flux pairs and link that to the vanishing viscosity limit, but that will be quite analysis involved and is off the focus of the current paper. We will save the rigorous analysis of the entropy solutions to \eqref{eq:main} as one of the possible future directions.


\section{Numerical approximation of the Caputo derivative}


While there has been thorough understanding on numerical approximation of standard order differential equations, the investigation of numerical methods for fractional ODEs (FODEs) is quite limited since rigorous numerical analysis of numerical methods of FODEs has met additional difficulties. However, there has been a growing interest in the research of this area. 

Langlands and Henry \cite{Langlands2005719} considered the diffusion equation with fractional order time derivatives, and introduced an $L_1$-stable scheme for this equation. Sun and Wu \cite{SUN2006193} constructed a finite difference scheme with $L_1$ approximation for the fractional time derivative. Lin and Xu \cite{Xu:2007eaba} analyzed a finite difference scheme for the time discretization of the time fractional diffusion equation, and proved that the convergence in time is of $2-\alpha$ order. Lv and Xu \cite{LvXu} improved the error estimates by giving a more accurate coefficient. Zhao, Sun and Karniadakis \cite{Zhao:2015gfba} derived two second-order approximation formulas for fractional time derivatives involved in anomalous diffusion and wave propagation. Lin and Liu \cite{Lin2007856} analyzed a linear multistep method and proved the stability and convergence of the method. Kumar and Agrawal \cite{Kumar20062602} proposed  another numerical approach for a class of FODEs, which can be reduced into a Volterra type integral equation. Based on this approach, Cao and Xu \cite{Cao2013154} presented a general technique to construct high order numerical schemes for FODEs. 

In this paper, we use the following numerical approximation of the Caputo derivative, which basically follows the work done by  Lin and Xu in \cite{Xu:2007eaba}. We assume uniform time step size $\tau$, and denote $t^n=n\tau$, $n=0,1,2,\cdots$. We denote numerical approximation of $u(t^n)$ by $U^n$.

To construct a first order method, for $t=t^{n+1}$,  we assume uniform partition in time,
\[
0=t_0<t_1<\cdots<t_n<t_{n+1}=t.
\]
Afterwards, each standard time derivative is approximated by the forward difference.  
That is, 
\begin{align}
\partial_t^\alpha u (t^{n+1}) & \approx \frac 1 {\Gamma (1-\alpha)} \sum_{k=0}^n \int_{t_k}^{t_{k+1}} \frac{U^{k+1} - U^{k}}{(t_{n+1}-s)^\alpha}ds 
\nonumber \\
& =  \frac 1 {\Gamma (1-\alpha)(1-\alpha)} \sum_{k=0}^n \frac{(n+1-k)^{1-\alpha}-(n-k)^{1-\alpha}}{\tau^\alpha} (U^{k+1}-U^k) \nonumber \\
& \approx  \frac{1}{\Gamma(2-\alpha)\tau^\alpha} \left(U^{n+1}- \sum_{n=0}^k c^{n+1}_k U^k\right):=D_t^\alpha U^{n+1}, \label{eq:Cdev}
\end{align}
where
\[
c^{n+1}_k = 2(n+1-k)^{1-\alpha}-(n+2-k)^{1-\alpha}-(n-k)^{1-\alpha}, \quad k=1,\cdots,n.
\]
\[
c^{n+1}_0=(n+1)^{1-\alpha}-n^{1-\alpha}.
\]
By direct calculation, and note that $y=x^{1-\alpha}$ is an increasing and concave function for $x>0$, we get,
\begin{equation}\label{cond:ck}
\sum_{k=0}^n c^{n+1}_k=1,\quad c^{n+1}_k >0, \quad k=0,\cdots,n.
\end{equation}
Hence, while the standard time derivative gives the instantaneous rate of change, from its numerical approximation, from \eqref{eq:Cdev}, the Caputo derivative can be interpreted as the rate of change of a quantity from a convex combination of its history values, and the coefficients $c_k^{n+1}$ specifies the influence strength due to the memory effect. And the influence strengh of a history values decreases in time since its memory effect becomes weaker. 

It is worth remarking that, we have for fixed $\alpha$,
\[
c^{n+1}_n=2-2^{1-\alpha}-0^{1-\alpha}=2-2^{1-\alpha},
\]
which is independent of $n$. Hence, we further denote $c^{n+1}_n$ by $\tilde c$, which plays a significant role in the CFL conditions for explict upwind schemes, as we shall show later.  

The consistency error has been proved by Lv and Xu \cite{LvXu}, which is the following Theorem, 
\begin{lemma}
For any $\alpha\in(0,1)$, the truncation error of this scheme given by
\begin{equation}
\partial^\alpha_{t} u (t^{n}) = D_t^\alpha u (t^{n})+ r^{n}_{\tau},
\end{equation}
satisfied the following error estimate
\begin{equation}
|r^k_\tau| \le CM(u)\tau^{2-\alpha}, \quad \forall k=0,1,\cdots, n,
\end{equation}
where $C$ is independent of $u$ and $\tau$, $M(u) = \max\limits_{t\in (0,t^n]}|\partial_t^2 u(t)|$.
\end{lemma}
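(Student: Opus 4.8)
The plan is to recognize the L1 approximation $D_t^\alpha u(t_n)$ as the \emph{exact} Caputo derivative of the piecewise-linear interpolant of $u$, and then to estimate the resulting interpolation error against the singular kernel. Let $\Pi u$ denote the continuous function that interpolates $u$ at the nodes $t_0,\dots,t_n$ and is linear on each $[t_k,t_{k+1}]$, so that there $(\Pi u)'\equiv(u(t_{k+1})-u(t_k))/\tau$. Since the second line of \eqref{eq:Cdev} is obtained from the first by exact integration and the third line is merely a summation-by-parts rearrangement, one has the identity $D_t^\alpha u(t_n)=\frac{1}{\Gamma(1-\alpha)}\int_0^{t_n}(t_n-s)^{-\alpha}(\Pi u)'(s)\,ds$. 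Subtracting this from the integral representation of $\partial_t^\alpha u(t_n)$ gives
\[
r^n_\tau=\frac{1}{\Gamma(1-\alpha)}\int_0^{t_n}(t_n-s)^{-\alpha}\bigl(u-\Pi u\bigr)'(s)\,ds,
\]
so the entire problem reduces to controlling the derivative of the interpolation error weighted by the integrable-but-singular kernel $(t_n-s)^{-\alpha}$.

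Next I would split the integral at $t_{n-1}$, isolating the last cell $[t_{n-1},t_n]$ where the kernel concentrates its singularity. On this cell I estimate directly: by the mean value theorem $(\Pi u)'$ equals $u'(\xi)$ for some $\xi$ in the cell, whence $|(u-\Pi u)'(s)|=|u'(s)-u'(\xi)|\le\tau M(u)$; integrating $(t_n-s)^{-\alpha}$, which is integrable for $\alpha\in(0,1)$, over a cell of width $\tau$ produces a factor $\tau^{1-\alpha}/(1-\alpha)$ and hence a contribution of order $\tau^{2-\alpha}$. On the remaining range $[0,t_{n-1}]$ I would integrate by parts cell-by-cell, moving the derivative off $u-\Pi u$ and onto the kernel; because $u-\Pi u$ vanishes at every node, all boundary terms cancel, leaving $\frac{\alpha}{\Gamma(1-\alpha)}\int_0^{t_{n-1}}(t_n-s)^{-\alpha-1}(u-\Pi u)(s)\,ds$. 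Here I use the pointwise interpolation bound $|(u-\Pi u)(s)|\le\tfrac18 M(u)\tau^2$ together with $\int_0^{t_{n-1}}(t_n-s)^{-\alpha-1}\,ds=\frac1\alpha\bigl(\tau^{-\alpha}-t_n^{-\alpha}\bigr)\le\frac1\alpha\tau^{-\alpha}$; the factor $\alpha$ cancels and this part is again of order $\tau^{2-\alpha}$, uniformly in $n$. Adding the two pieces yields $|r^n_\tau|\le C(\alpha)M(u)\tau^{2-\alpha}$ with $C(\alpha)=\frac{1}{\Gamma(1-\alpha)}\bigl(\frac{1}{1-\alpha}+\frac18\bigr)$, which depends only on $\alpha$ and is independent of $u$, $\tau$ and $n$.

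The crux — and the reason a naive termwise bound fails — is that applying the pointwise interpolation estimate under the kernel $(t_n-s)^{-\alpha-1}$ produced by integration by parts gives a \emph{divergent} integral on the final cell, since $(t_n-s)^{-\alpha-1}$ is not integrable at $s=t_n$. The essential idea is therefore to treat the last cell with the weaker singularity $(t_n-s)^{-\alpha}$ and the sharper derivative bound $\tau M(u)$, and to reserve integration by parts for the cells bounded away from $t_n$, where the extra singular power $(t_n-s)^{-1}$ is harmless after integration. Balancing the order of vanishing of the interpolation error against the strength of the kernel singularity near $t_n$ is exactly what produces the $2-\alpha$ rate rather than the order-one rate that the crudest estimate would give; one should also record that $u$ is assumed $C^2$ on $(0,t_n]$ so that $M(u)$ is finite, matching the hypothesis implicit in the statement.
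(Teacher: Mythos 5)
Your argument is correct. Note that the paper itself gives no proof of this lemma --- it is quoted from Lv and Xu \cite{LvXu} --- and your proof is essentially the standard one from that literature: identify $D_t^\alpha u(t_n)$ as the exact Caputo derivative of the piecewise-linear interpolant, reduce to the weighted interpolation error, and split off the last cell so that integration by parts (using that $u-\Pi u$ vanishes at the nodes) is only applied where the strengthened kernel $(t_n-s)^{-\alpha-1}$ stays bounded. The one point you rightly flag --- that a blanket integration by parts would pair a non-integrable kernel with the interpolation error on the final cell --- is exactly the step that needs care, and your direct bound $|(u-\Pi u)'|\le \tau M(u)$ there handles it; the resulting constant $\frac{1}{\Gamma(2-\alpha)}+\frac{1}{8\Gamma(1-\alpha)}$ is independent of $u$, $\tau$ and $n$, as required.
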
 

It is worth remarking that, the focus of the current paper is to construct and analyze numerical methods for conservation laws with the Caputo derivative, so we choose to use a prevailing method for numerical approximation of the Caputo derivative.  Actually, since many higher order approximation are available for the Caputo derivative, (see, e.g. \cite{Zhao:2015gfba,Cao2013154}), the accuracy in time of the numerical methods proposed in the following can easily be improved  with these results. 

\section{Numerical methods and stability analysis}

In this section, we utilize the numerical approximation of the Caputo derivative introduced in the previous section to design numerical schemes for ODE's and conservation laws.  We focus on the stability condition for each scheme, especially how they differ from the models with standard time derivatives.
\subsection{Backward Euler method for a ODE model}
We consider the following ODE model,
\begin{equation}\label{mod:ode}
\partial_t^\alpha u(t)=\lambda u(t).
\end{equation}
Here, $\lambda$ is a complex number with ${\bf Re} (\lambda) \le 0$, which is reminiscent of the eigenvalue of an (discrete)  operator.

The backward Euler method for this ODE is
\begin{equation}\label{backE}
D_t^\alpha U^{n+1}=\lambda U^{n+1}.
\end{equation}
Multiply each side by $\tau^{\alpha}/\Gamma (2-\alpha)$, we reformulate the above equation to
\[
\left(1-\frac{\lambda \tau^\alpha}{\Gamma(2-\alpha)} \right)U^{n+1} = \sum_{k=0}^{n} c_k^{n+1} U^{k}.
\]
If we denote $z=\lambda \tau^\alpha / \Gamma (2-\alpha)$, the stability polynomial $\pi(\xi;z)$ for this numerical method is
\[
\pi(\xi;z)=\left(1-z \right)\xi^{n+1} - \sum_{k=0}^{n} c^{n+1}_k \xi^{k}.
\]

In the following, we dicussion two different situations, when $\lambda \ne 0$ and when $\lambda=0$.

When $\lambda \ne 0$, then ${\bf Re} (z) \le 0$ and $z\ne 0$, then we obtain that,
$|1-z|>1$.
If we assume $\xi_0$ with $|\xi_0|\ge1$ is a root to $\pi(\xi;z)$, then for $k\le n$, we have
\[
|\xi_0^k| \le |\xi_0|^k \le |\xi_0^{n+1}|.
\]
Then, we get,
\begin{align*}
|(1-z)\xi_0^{n+1}|& = |1-z||\xi_0|^{n+1 }=|\sum_{k=0}^n c^{n+1}_k \xi_0^k| \\
& \le \sum_{k=0}^n c^{n+1}_k |\xi_0|^k  \le \left( \sum_{k=0}^n c^{n+1}_k \right) |\xi_0|^{n+1}=|\xi_0|^{n+1},
\end{align*}
which is a contradiction. This means, the stability polynomial only has  roots with modulus less than $1$, and the method is absolute stable. 

When $\lambda=0$, then $z=0$, and the stability analysis reduces to the zero stability of the time discretization. If the modulus of the root of the stablity polynomial is strictly larger than $1$, then the analysis above still carries out, and one can show that no such roots exist. 

If the modulus of the root of the stablity polynomial is $1$, we can take assume that root is $\xi_0=e^{i\theta}$. If $\theta=0$, then $\xi_0=1$, we get 
\[
\pi(1;0)= 1^{n+1} - \sum_{k=0}^{n} c^{n+1}_k=0.
\]
Then, we compute
\[
\frac{d \pi (\xi;0)}{d \xi} =(n+1)\xi^n - \sum_{k=0}^{n} c^{n+1}_k k \xi^{k-1}.
\]
Note that, the coefficients $\{ c^{n+1}_k\}^n_{k=0} $ satisfy condition \eqref{cond:ck}, therefore,
\[
\left| \sum_{k=0}^{n} c^{n+1}_k k \right|<\left| \sum_{k=0}^{n} c^{n+1}_k n\right|=n.
\]
From this, we conclude that,
\[
\frac{d \pi (1;0)}{d \xi}=(n+1) - \sum_{k=0}^{n} c_k k >n+1-n=1 \ne 0.
\]
Hence, $1$ is not a repeat root of the stability polynomial. 

If $\theta\ne 0$, then we get the following equation,
\[
e^{i(n+1)\theta}=\sum_{k=0}^{n} c^{n+1}_k e^{ik\theta}.
 \]
 By dividing each side by $e^{i(n+1)\theta}$, we get
 \[
 1=\sum_{k=0}^{n} c^{n+1}_k e^{i(k-1-n)\theta}.
  \] 
Since $\theta\ne 0$, at least one $e^{i(k-1-n)\theta}$ is not real-valued. Therefore, the right hand side of the above equation is a convex combination of $n+1$ unit complex numbers. So, we conclude that the right hand side never add up to $1$. Therefore, $e^{i\theta}$ with $\theta \ne 0$ is not a root to the stability polynomial.

Finally, we conclude that, when ${\bf Re} (\lambda) \le 0$, the backward Euler method for the ODE model is unconditionally stable. In other words, we have proved that 
\begin{theo}
The backward Euler method \eqref{backE} for the time fractional ODE model \eqref{mod:ode} is A-stable.
\end{theo}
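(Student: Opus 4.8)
The plan is to recast A-stability as a root-location statement for the stability polynomial and then exploit the two structural properties of the memory coefficients recorded in \eqref{cond:ck}, namely $\sum_{k=0}^n c^{n+1}_k = 1$ and $c^{n+1}_k > 0$. Writing $z = \lambda \tau^\alpha / \Gamma(2-\alpha)$, the method is A-stable precisely when, for every $n$ and every $z$ with ${\bf Re}(z) \le 0$, all roots $\xi$ of
\[
\pi(\xi; z) = (1-z)\xi^{n+1} - \sum_{k=0}^n c^{n+1}_k \xi^k
\]
satisfy $|\xi| \le 1$, with any root on the unit circle being simple, so that the numerical solution stays bounded over the whole half-plane ${\bf Re}(\lambda) \le 0$. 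I would organize the argument around the dichotomy $\lambda \ne 0$ versus $\lambda = 0$, since the former yields strict stability (all roots strictly inside the disk) while the latter is the delicate boundary case corresponding to zero-stability of the time discretization.

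For $\lambda \ne 0$ the first step is to observe that ${\bf Re}(z) \le 0$ and $z \ne 0$ force $|1-z| > 1$, since $|1-z|^2 = (1-{\bf Re}(z))^2 + {\bf Im}(z)^2 \ge 1$ with equality only at $z=0$. Next I would argue by contradiction: suppose $\xi_0$ is a root with $|\xi_0| \ge 1$. Then $|\xi_0|^k \le |\xi_0|^{n+1}$ for $0 \le k \le n$, and the defining relation $(1-z)\xi_0^{n+1} = \sum_k c^{n+1}_k \xi_0^k$ combined with the triangle inequality and the coefficient properties gives
\[
|1-z|\,|\xi_0|^{n+1} \le \sum_{k=0}^n c^{n+1}_k |\xi_0|^k \le |\xi_0|^{n+1},
\]
contradicting $|1-z| > 1$. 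Hence every root lies strictly inside the unit disk.

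The harder part, which I expect to be the main obstacle, is the boundary case $\lambda = 0$, i.e. $z = 0$, where $|1-z| = 1$ and the strict inequality above degenerates. Roots of modulus strictly greater than $1$ are still excluded by the same estimate, so the task is to show that the only unit-modulus root of $\pi(\xi; 0) = \xi^{n+1} - \sum_k c^{n+1}_k \xi^k$ is $\xi = 1$ and that it is simple. For $\xi = 1$ one checks $\pi(1;0) = 1 - \sum_k c^{n+1}_k = 0$, and then that it is simple by bounding $\sum_k k\, c^{n+1}_k < n$ (using $0 \le k \le n$ and $\sum_k c^{n+1}_k = 1$), so that $\frac{d}{d\xi}\pi(1;0) = (n+1) - \sum_k k\, c^{n+1}_k > 1 \ne 0$. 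For a putative unit root $\xi_0 = e^{i\theta}$ with $\theta \ne 0$, dividing the root equation by $e^{i(n+1)\theta}$ yields $1 = \sum_k c^{n+1}_k e^{i(k-1-n)\theta}$, exhibiting $1$ as a strict convex combination of the $n+1$ unit complex numbers $e^{i(k-1-n)\theta}$. The crux is then the geometric fact that a convex combination of points on the unit circle has modulus $1$ only if all the points coincide; since $\theta \ne 0$ makes the phases $(k-1-n)\theta$ genuinely distinct modulo $2\pi$, the combination has modulus strictly less than $1$ and cannot equal $1$, ruling out such roots. Assembling the two cases gives $|\xi| \le 1$ for all roots with the single boundary root $\xi = 1$ simple, which is exactly A-stability. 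The only subtlety to pin down carefully is this last convexity argument, making precise that strict convexity of the disk forces strict inequality unless the unit points degenerate to a single point.
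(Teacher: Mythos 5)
Your proposal is correct and follows essentially the same route as the paper: the strict inequality $|1-z|>1$ for $\lambda\neq 0$ combined with the triangle inequality and $\sum_k c^{n+1}_k=1$ to exclude roots of modulus at least one, and for $\lambda=0$ the simplicity of the root $\xi=1$ via the derivative bound $\sum_k k\,c^{n+1}_k<n$ together with the convex-combination argument ruling out other unit-modulus roots. If anything, your appeal to the equality case of the triangle inequality (strict convexity of the disk) in the $\theta\neq 0$ step is stated more precisely than the paper's version, though your aside that the phases $(k-1-n)\theta$ are ``genuinely distinct modulo $2\pi$'' is not needed and not always true; what matters is only that not all the unit points equal $1$, which follows from $e^{-i\theta}\neq 1$.
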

This result agrees with the stability results in \cite{Xu:2007eaba} for ODE's and parabolic equations, which is not very surprising since it is believed that the fractional time derivative adds diffusion in time \cite{Xu:2007eaba,Zhao:2015gfba}. However, stability analysis for fractional time hyperbolic problems is quite open, and in the next section, we aim to focus on the scalar conservation laws. 

To provide some intuitive knowledge of the stability region of the Backward Euler method \eqref{backE}, we use the boundary locus method for linear multistep methods to numerically plot the boundary points of this method for different $\alpha$ and $n$ in the complex plabe of the $z$ variable. As shown in Figure \ref{astability}, the stability regions are the exteriors to the closed curves. We observe that, the stability region not only depends on $\alpha$, but also, it depends on n, which modifies the coefficients and is in propartion to the length of memory effect.  As $n \rightarrow \infty$, we see that the boundary is asymptotically approaching a limit curve.

\begin{figure}[htbp]
\begin{centering}
	\includegraphics[width=0.45\textwidth]{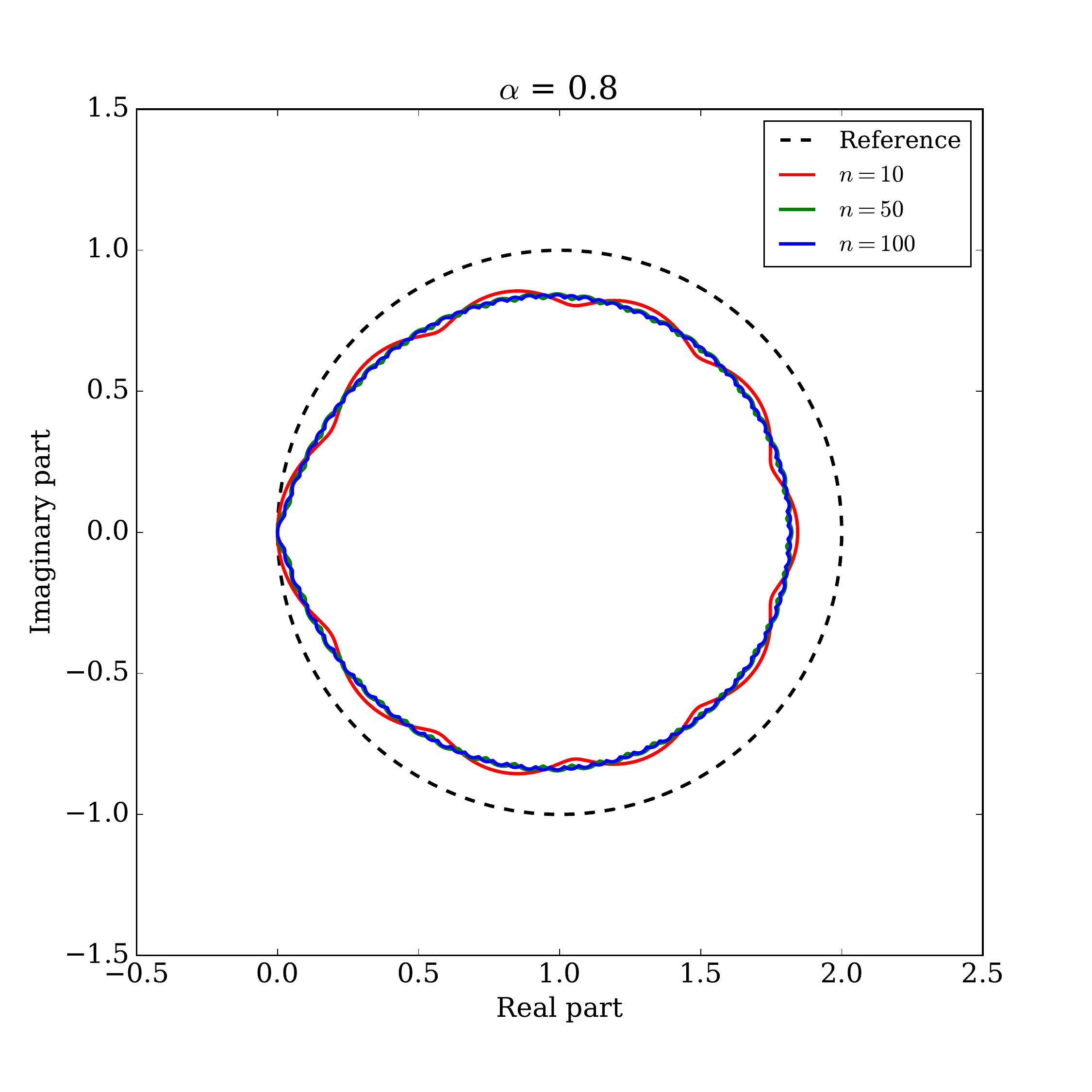}
	\includegraphics[width=0.45\textwidth]{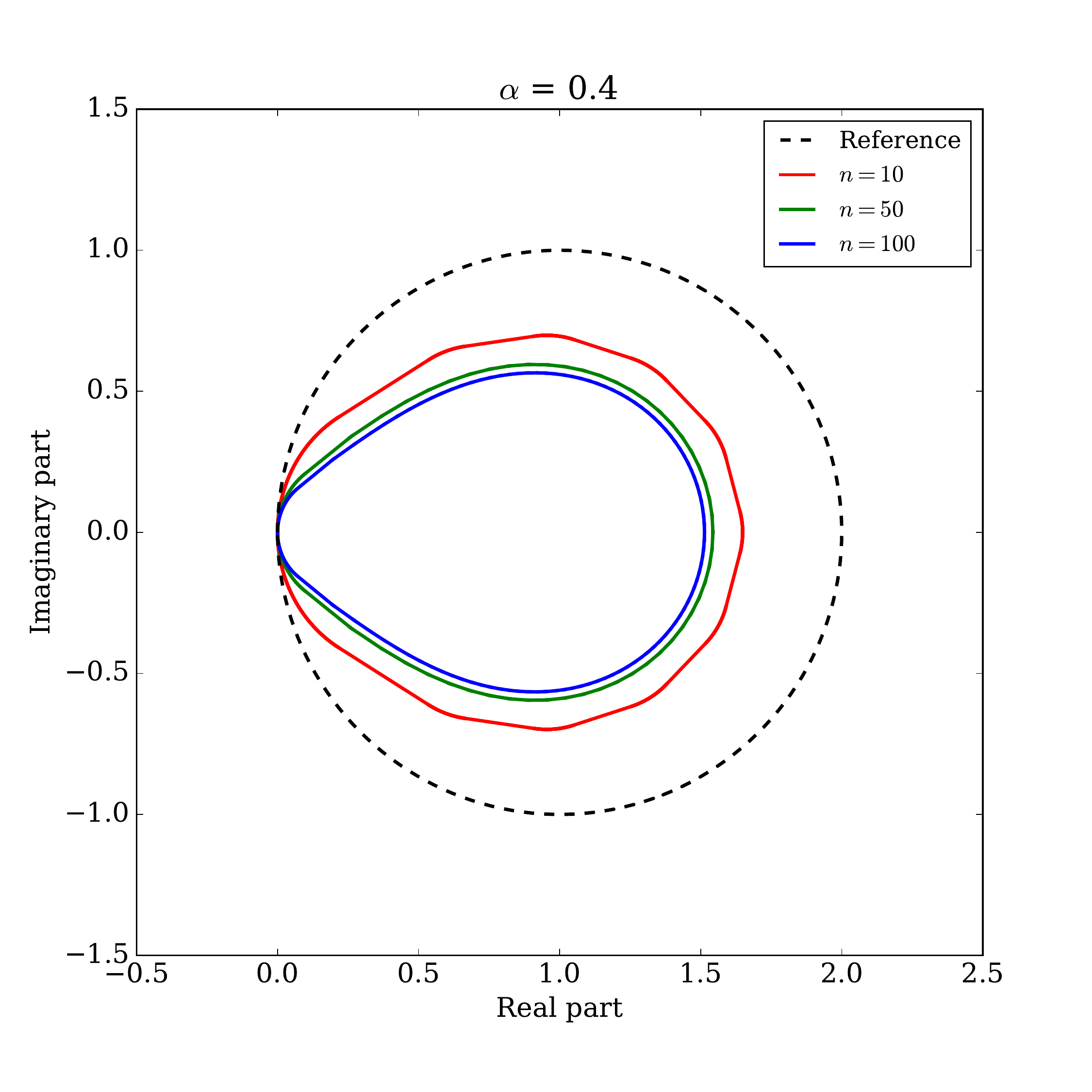}
	\caption{Left: Absolute stability zone for $\alpha=0.8$, $n=10,50,100$. Right: Absolute stability zone for $\alpha=0.4$, $n=10,50,100$. Reference: backward Euler for the standard derivative.}
	\label{astability}
	\end{centering}
\end{figure}

Notice that, our stability region include the imaginary axis ${\bf Re} (\lambda) = 0$, which is the same case for backward Euler method with standard time derivatives, and thus it has the potential to apply for hyperbolic equations. Although, for hyperbolic problems with standard time derivatives, explicit methods are in general preferred, we shall see that the Caputo time derivatives cause additional constraint in stability, which motivate us to design implicit schemes for \eqref{eq:main}.

\subsection{Explicit upwind method for the scalar conservation law}

\subsubsection{The first order scheme}
We consider the one dimensional conservation law
\begin{equation}\label{eq:cons}
\partial_t^\alpha u+ (f(u))_x=0,
\end{equation}
where the flux function can be decomposed as
\begin{equation}\label{cond:flux}
f=f^+ + f^- ,\quad (f^+)' \ge 0,\quad (f^-)'\le 0.
\end{equation}
Without loss of generality, the flux function $f(u)$ might be nonlinear. But obviously, it also includes the linear advection case, when $f=au$.

Again, we assume uniform time step size $\tau$, and denote $t^n=n\tau$, $n=0,1,2,\cdots$. Also, on the computation domain $[a,b]$, we assume uniform spatial grids $x_j= a+j h$, for $j=0,1,\cdots, M$, where the spatial grid size $h=\frac{b-a}{M}$. We denote the numerical approximation of $u\left(x=x_j,t=t^k\right)$ by $U_j^k$. Then, the first order upwind method for the nonlinear conservation law is,
\begin{equation}\label{firstorder}
D_t^\alpha U^{n+1}_j + \frac{1}{h} \left(f^+ (U_j^n)- f^+(U_{j-1}^n) \right)+\frac{1}{h}\left(f^-(U_{j+1}^n)-f^-(U_j^n)\right) =0.
\end{equation}

If we denote 
\[
\lambda^{+,n}_j = \frac{a^{+,n}_j  \tau^\alpha}  {h \Gamma (2-\alpha)}, \quad \lambda^{-,n}_j = \frac{a^{-,n}_j  \tau^\alpha}  {h \Gamma (2-\alpha)},
\]
where for some $\xi^n_j$ between $U^n_{j-1}$ and $U^n_{j}$ and some $\eta^n_j$ between $U^n_j$ and $U^n_{j+1}$, we have 
\[
a^{+,n}_j=\frac{f^+ (U_j^n)- f^+(U_{j-1}^n)}{U^n_j-U^n_{j-1}}=(f^+)' (\xi^n_j) \ge 0,
\]
\[
a^{-,n}_j=\frac{f^- (U_{j+1}^n)- f^-(U_{j}^n)}{U^n_{j+1}-U^n_{j}}=(f^-)' (\eta^n_j) \le 0,
\]
 then, the numerical method can be rewritten as
\begin{equation}\label{reform}
U_j^{n+1}=(\tilde c-\lambda^{+,n}_j +\lambda^{-,n}_j )U_j^{n}+ \lambda^{+,n}_j  U_{j-1}^n -\lambda^{-,n}_j U^n_{j+1}+ \sum_{k=0}^{n-1} c^{n+1}_k U_j^k.
\end{equation}
Therefore, we propose the following CFL condition for the first order upwind method:
\begin{equation}\label{CFL}
 \frac{ \tau^\alpha}  {h \Gamma (2-\alpha)} \left( \max |(f^+)'|+\max|(f^-)'| \right)  \le \tilde c.
\end{equation}
We observe that, the CFL condition essentially agrees with the conservation law with standard time derivatives except that the time step $\tau$ gains an exponent $\alpha$ due to the Caputo derivative.

With the CFL condition, we have
\[
\tilde c-\lambda^{+,n}_j +\lambda^{-,n}_j  \ge 0.
\]
Hence,  we can easily conclude the maximal principle for the upwind method,  $\forall n \in \N^+$
\[
\max_j |U^n_j| \le \max_j |U^0_j|. 
\]

Also, we can show that this method is TVD if the CFL condition \eqref{CFL} is satisfied. Actually, we can rewrite \eqref{reform} as 
\begin{equation}
U_j^{n+1}=\tilde c U_j^{n}-\delta f^+(U^n_j) +\delta f^+(U^n_j)  + \delta f^+(U^n_{j-1})-\delta f^+(U^n_{j+1})+ \sum_{k=0}^{n-1} c^{n+1}_k U_j^k,
\end{equation}
where $\delta =\frac{  \tau^\alpha}  {h \Gamma (2-\alpha)}$. And we consider another solution $V^n_j$, which satisfies the same difference equation,
\begin{equation}
V_j^{n+1}=\tilde c V_j^{n}-\delta f^+(V^n_j) +\delta f^+(V^n_j)  + \delta f^+(V^n_{j-1})-\delta f^+(V^n_{j+1})+ \sum_{k=0}^{n-1} c^{n+1}_k V_j^k.
\end{equation}
Substract these two equations, and we can get 
\begin{align*}
U_j^{n+1}-V_j^{n+1} &=\tilde c \left( U_j^{n}-V_j^{n}\right) -\delta \left(f^+(U^n_j)-f^+(V^n_j) \right) +\delta \left(f^-(U^n_j)-f^-(V^n_j) \right) \\
& +\delta \left(f^+(U^n_{j-1})-f^+(V^n_{j-1}) \right) - \delta \left(f^-(U^n_{j+1})-f^-(V^n_{j+1}) \right) + \sum_{k=0}^{n-1} c^{n+1}_k \left( U_j^k- V_j^k \right).
\end{align*}
Then, by mean value theorem, we have
\begin{align*}
U_j^{n+1}-V_j^{n+1} &=\left(\tilde c -\delta (f^+)'(\xi_j^+) +\delta (f^-)'(\xi_j^-)\right)\left( U_j^{n}-V_j^{n}\right)+\delta   (f^+)'(\xi_{j-1}^+) \left( U_{j-1}^{n}-V_{j-1}^{n}\right) \\
& - \delta   (f^-)'(\xi_{j+1}^-) \left( U_{j+1}^{n}-V_{j+1}^{n}\right) + \sum_{k=0}^{n-1} c^{n+1}_k \left( U_j^k- V_j^k \right),
\end{align*}
 where $\xi_j^+$ and $\xi_j^-$ are respectively some numbers between $U^{n}_j$ and $V^{n}_j$.
Note, when the CFL condition \eqref{CFL} is satisfied, we have
\[
\tilde c -\delta (f^+)'(\xi_j^+) +\delta (f^-)'(\xi_j^-) \ge0,
\]
Them,  by triangle inequality, we have
\begin{align*}
\left| U_j^{n+1}-V_j^{n+1} \right| &=\left(\tilde c -\delta (f^+)'(\xi_j^+) +\delta (f^-)'(\xi_j^-)\right)\left| U_j^{n}-V_j^{n}\right|+\delta   (f^+)'(\xi_{j-1}^+) \left| U_{j-1}^{n}-V_{j-1}^{n}\right| \\
& - \delta   (f^-)'(\xi_{j+1}^-) \left| U_{j+1}^{n}-V_{j+1}^{n}\right| + \sum_{k=0}^{n-1} c^{n+1}_k \left| U_j^k- V_j^k \right|.
\end{align*}
Then, we sum the equation over $j$ and get
\[
\sum_j \left|U^{n+1}_j-V^{n+1}_{j} \right|  \le  \sum_{k=0}^{n} c^{n+1}_k   \sum_j  \left| U_j^k- V_{j}^k\right|.
\]
Note here, the flux terms have all been canceled out. By induction, we get,
\[
 \|U^n-V^n\|_{\ell^1} \le \|U^0-V^0\|_{\ell^1},
 \] 
namely, we have proved that
\begin{theo}
the first order upwind method \eqref{firstorder} for the scalar conservation law \eqref{eq:main} is $\ell^1$ contracting, when the CFL condition \eqref{CFL} is satisfied. 
\end{theo}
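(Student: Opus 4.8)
The plan is to establish $\ell^1$ contraction by a comparison argument: I take two numerical solutions $U^n$ and $V^n$ produced by the same scheme \eqref{firstorder} (from possibly different data) and prove that $\|U^n - V^n\|_{\ell^1} \le \|U^0 - V^0\|_{\ell^1}$ for every $n$. Writing $W_j^k := U_j^k - V_j^k$, the aim is a discrete recurrence for $\sum_j |W_j^n|$ that closes under induction thanks to the memory coefficients.

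First I would recast the scheme in its explicit update form \eqref{reform}, which isolates the coefficient $\tilde c - \lambda_j^{+,n} + \lambda_j^{-,n}$ of the on-node value $U_j^n$ from those of the upwind neighbors $U_{j-1}^n,\,U_{j+1}^n$ and from the history sum $\sum_{k=0}^{n-1} c_k^{n+1} U_j^k$. Subtracting the equations for $U$ and $V$ and applying the mean value theorem to each flux difference $f^{\pm}(U_\bullet^n) - f^{\pm}(V_\bullet^n)$ linearizes the relation, replacing $(f^{\pm})'$ by its value at intermediate points $\xi_\bullet^{\pm}$. This produces a linear recurrence for $W_j^{n+1}$ whose coefficients are precisely $\tilde c - \delta (f^+)'(\xi_j^+) + \delta (f^-)'(\xi_j^-)$, together with $\delta (f^+)'(\xi_{j-1}^+)$, $-\delta (f^-)'(\xi_{j+1}^-)$, and the $c_k^{n+1}$.

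The decisive point is that under the CFL condition \eqref{CFL} every one of these coefficients is nonnegative: the neighbor coefficients because $(f^+)' \ge 0$ and $(f^-)' \le 0$ by the splitting \eqref{cond:flux}, the history coefficients by \eqref{cond:ck}, and the on-node coefficient precisely because \eqref{CFL} forces $\delta(\max|(f^+)'| + \max|(f^-)'|) \le \tilde c$. Nonnegativity lets me pass to absolute values via the triangle inequality without sign cancellations, and then sum over $j$. Here the flux-derivative terms cancel: after shifting the summation index so that $(f^+)'(\xi_{j-1}^+)$ and $(f^-)'(\xi_{j+1}^-)$ realign with the on-node contributions, the weight surviving on each level-$n$ node is exactly $\tilde c = c_n^{n+1}$. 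The total weight distributed over all history levels is therefore $\sum_{k=0}^n c_k^{n+1}$, which equals $1$ by \eqref{cond:ck}, giving the clean estimate $\sum_j |W_j^{n+1}| \le \sum_{k=0}^n c_k^{n+1} \sum_j |W_j^k|$.

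The main obstacle, and the feature distinguishing this from the classical $\alpha = 1$ argument, is that the scheme is not one-step: $W^{n+1}$ depends on the entire history $W^0, \dots, W^n$, so I cannot contract against a single previous level. The induction must instead be global. Assuming $\|W^k\|_{\ell^1} \le \|W^0\|_{\ell^1}$ for all $k \le n$, the recurrence together with the normalization $\sum_{k=0}^n c_k^{n+1} = 1$ yields $\|W^{n+1}\|_{\ell^1} \le \bigl(\sum_{k=0}^n c_k^{n+1}\bigr)\|W^0\|_{\ell^1} = \|W^0\|_{\ell^1}$. Thus the whole argument hinges on the convex-combination structure of the discretized Caputo derivative: it is the identity $\sum_k c_k^{n+1} = 1$ paired with $c_k^{n+1} \ge 0$ that turns a weighted history bound into genuine contraction rather than possible growth.
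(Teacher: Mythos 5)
Your proposal is correct and follows essentially the same route as the paper: subtract the two solutions in the reformulated update \eqref{reform}, linearize the flux differences by the mean value theorem, use the CFL condition \eqref{CFL} to make every coefficient nonnegative so the triangle inequality applies, cancel the flux terms after summing over $j$, and close the global induction via $\sum_{k=0}^{n} c_k^{n+1} = 1$. No gaps to report.
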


As an immediate consequence, if we set $V^n_j$ as $U^{n}_{j+1}$, then we get that for $n\in \N^+$,
\[
\mbox{TV}\left[U^n\right] \le \mbox{TV}\left[U^0\right]  ,
 \]
 where $\mbox{TV}\left[U^n\right]= \sum_j | U^n_{j+1}-U^n_j | $. In other words, we have shown that,
 \begin{cor}
 the first order upwind method \eqref{firstorder} for the scalar conservation law \eqref{eq:main} is TVD, when the CFL condition \eqref{CFL} is satisfied. . 
 \end{cor}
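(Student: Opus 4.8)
The plan is to obtain this directly from the $\ell^1$ contraction property just established, using the standard device of comparing the numerical solution with its own spatial shift. First I would observe that the upwind scheme \eqref{firstorder} is translation invariant in space: its coefficients $\tilde c$ and $c^{n+1}_k$ depend only on $n$ and $k$, never on the grid index $j$, and the numerical fluxes enter only through the pointwise evaluations $f^\pm(U^n_\bullet)$. Consequently, if $\{U^n_j\}$ solves \eqref{firstorder}, then so does the shifted sequence $V^n_j := U^n_{j+1}$, since replacing $j$ by $j+1$ throughout the update \eqref{reform} reproduces verbatim the same recurrence with $V$ in place of $U$. Checking this translation invariance carefully is the one place that deserves a line of justification, though it is routine given the constant-in-$j$ structure of the coefficients and fluxes.

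With $V$ thereby recognized as a genuine solution of the same scheme, I would simply apply the preceding theorem to the pair $(U,V)$, which gives $\|U^n-V^n\|_{\ell^1}\le\|U^0-V^0\|_{\ell^1}$ for every $n\in\N^+$. It then remains only to read off both sides as total variations: by definition $\|U^n-V^n\|_{\ell^1}=\sum_j |U^n_j-U^n_{j+1}|=\mbox{TV}[U^n]$, and identically the right-hand side equals $\mbox{TV}[U^0]$. Combining these identities with the contraction estimate yields $\mbox{TV}[U^n]\le\mbox{TV}[U^0]$, which is precisely the asserted TVD property. No new estimate is needed beyond the one already proved; the entire content of the corollary is packaged in the particular choice $V^n_j=U^n_{j+1}$.

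The only genuine subtlety, and hence the step I expect to require the most care, is the bookkeeping at the spatial boundary. The contraction estimate was derived by summing over all $j$ and exploiting the telescoping cancellation of the flux differences, so I would either treat the spatial index as ranging over all of $\Z$ (the whole-line setting in which the cancellation is exact) or confirm that whatever boundary treatment is imposed on $[a,b]$ leaves the summation-by-parts cancellation and the definition of $\mbox{TV}$ intact. Apart from ensuring consistency between the domain used here and in the contraction theorem, the argument is immediate.
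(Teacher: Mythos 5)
Your argument is correct and is exactly the paper's proof: the corollary is obtained by choosing $V^n_j = U^n_{j+1}$ in the $\ell^1$ contraction theorem and identifying $\|U^n - V^n\|_{\ell^1}$ with $\mbox{TV}[U^n]$. The additional remarks on translation invariance and boundary bookkeeping are sensible but do not change the substance of the argument.
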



\subsubsection{MUSCL scheme}

To construct a second order method in space, the positve and negative fluxes can be approximated by the piecewise linear function,
\[
f^{\pm,n}(x)=f^{\pm,n}_j + s^{\pm,n}_j (x-x_j),\quad x_{j-\frac 1 2}< x <x_{j+\frac 1 2},
 \] 
where we denote $f^{\pm,n}_j=f^{\pm,n}(U^n_j)$. The slope functions are determined by a limiter function,
\[
s^{\pm,n}_j  = \frac{f^{\pm,n}_j-f^{\pm,n}_{j-1}}{h} \phi^0 \left( \frac{f^{\pm,n}_{j+1}-f^{\pm,n}_{j}}{f^{\pm,n}_j-f^{\pm,n}_{j-1}}  \right).
\]
In this paper, we only consider the minmod limiter
\[
\phi^0 (\theta) = \max (0, \min(1,\theta)),
\]
or the Van Leer limiter,
\[
\phi^0(\theta)= \frac{|\theta|+\theta}{1+\theta}.
\]
Note that, both limiters are symmetric, i.e., $a \phi^0(b/a) = b \phi^0 (a/b)$. Consequently, the second-order flux splitting method is given by
\[
D_t^\alpha U^{n+1}_j + \frac{1}{h} \left(f^{+,n} (x_{j+\frac 1 2}-)- f^{+,n}(x_{j-\frac 1 2}-) \right)+\frac{1}{h}\left(f^{-,n} (x_{j+\frac 1 2}+)-f^{-,n} (x_{j-\frac 1 2}+)\right) =0.
\]
Note that, the numerical method above can be rewritten as
\[
D_t^\alpha U^{n+1}_j + \psi_j^{+,n}\frac{f^{+,n}_j-f^{+,n}_{j-1}}{h} + \psi^{-,n}_j \frac{f^{-,n}_{j+1}-f^{-,n}_{j}}{h}=0.
\]
with the coefficients
\[
\psi_j^{+,n}=1 + \frac 1 2 \phi^0 \left(\frac{f^{+,n}_{j+1}-f^{+,n}_{j}}{f^{+,n}_j-f^{+,n}_{j-1}} \right) - \frac 1 2 \phi^0   \left(\frac{f^{+,n}_{j-1}-f^{+,n}_{j-2}}{f^{+,n}_j-f^{+,n}_{j-1}} \right),
\]
\[
\psi_j^{-,n}=1 + \frac 1 2 \phi^0 \left(\frac{f^{-,n}_{j+2}-f^{-,n}_{j+1}}{f^{-,n}_{j+1}-f^{-,n}_{j}} \right) - \frac 1 2 \phi^0   \left(\frac{f^{-,n}_{j}-f^{-,n}_{j-1}}{f^{-,n}_{j+1}-f^{-,n}_{j}} \right).
\]
Again, one can apply the mean value theorem for $f^+$ and $f^-$, and the second order flux splitting scheme for the nonlinear conservation law problem can still be reformulated in the form of \eqref{reform}, with
\[
a^{+,n}_j=\psi_j^{+,n}(f^+)' (\xi^n_j),\quad a^{-,n}_j=\psi_j^{-,n}(f^-)' (\eta^n_j) .
\]

Since both limiters satisfy the conditions,
\[
0 \le \frac{\phi^0 (\theta)}{\theta} \le 2,\quad 0 \le \phi^0(\theta) \le 2,
\]
so the two coefficients $\psi_j^{\pm,n} \in [0,2]$, which implies $\pm a^{\pm,n}_j$ are both nonnegative. Also, we can propose the following CFL condition for the second order flux splitting method,
\begin{equation}\label{CFL2}
 2 \frac{ \tau^\alpha}  {h \Gamma (2-\alpha)} \left( \max |(f^+)'|+\max|(f^-)'| \right)  \le \tilde c.
\end{equation}
Clearly, with the above condition satisfied, the coefficients on the right hand side of the form \eqref{reform} are all nonnegative. Therefore, we can conclude conditional TVD properties and stability properties by the argument similar to the first order cases.

We observe that, for explicit methods of conservation laws, although the stability constraint only changes to $\tau^\alpha = O(h)$ due to the Caputo derivative, in practice, it makes these methods infeasible. For example, when $\alpha= \frac{1}{2}$, this constraint is already as restricted as explicit methods for parabolic equations. And, when $\alpha \rightarrow 0$, the choice of time steps suffers severely from the CFL conditions. 
%
\subsection{Implicit upwind method for the scalar conservation law}
\subsubsection{Stability analysis}
As we see in the previous section, albeit we are able to derive modified CFL condition for the scalar conservation law,  the stability constraint implies that the time step $\Delta t$ is a higher order small quantity of the spatial size $\Delta x$. Therefore, we are motivated to analyze the implicit upwind scheme for equation  \eqref{eq:cons} with the flux function $f$ satisfies the condition \eqref{cond:flux}, which is given by
\begin{equation}\label{imfirst}
D_t^\alpha U^{n+1}_j + \frac{1}{h} \left(f^+ (U_{j-1}^{n+1})- f^+(U_{j-2}^{n+1}) \right)+\frac{1}{h}\left(f^-(U_{j+1}^{n+1})-f^-(U_j^{n+1})\right) =0.
\end{equation}

Next, we introduce similar notations to reformulate the scheme. If we denote 
\[
\lambda^{+,n}_j = \frac{a^{+,n}_j  \tau^\alpha}  {h \Gamma (2-\alpha)}, \quad \lambda^{-,n}_j = \frac{a^{-,n}_j  \tau^\alpha}  {h \Gamma (2-\alpha)},
\]
where for some $\xi^{n+1}_j$ between $U^{n+1}_{j-1}$ and $U^{n+1}_{j}$ and some $\eta^{n+1}_j$ between $U^{n+1}_j$ and $U^{n+1}_{j+1}$, we have 
\[
a^{+,n+1}_j=\frac{f^+ (U_j^{n+1})- f^+(U_{j-1}^{n+1})}{U^{n+1}_j-U^{n+1}_{j-1}}=(f^+)' (\xi^{n+1}_j) \ge 0,
\]
\[
a^{-,n+1}_j=\frac{f^- (U_{j+1}^{n+1})- f^-(U_{j}^{n+1})}{U^{n+1}_{j+1}-U^{n+1}_{j}}=(f^-)' (\eta^{n+1}_j) \le 0,
\]
 then, the numerical method can be rewritten as
\begin{equation}\label{reform2}
U_j^{n+1}-\sum_{k=0}^{n} c^{n+1}_k U_j^k= -\lambda^{+,n}_j \left(U^{n+1}_j-U^{n+1}_{j-1}\right) -  \lambda^{-,n}_j \left(U^{n+1}_{j+1}-U^{n+1}_{j}\right).
\end{equation}

Now, we show the following result,
\begin{theo}
the implicit upwind scheme \eqref{imfirst} for the conservation law \eqref{eq:main} is unconditionally $\ell^1$ contracting. 
\end{theo}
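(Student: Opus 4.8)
The plan is to reproduce the $\ell^1$-contraction argument used for the explicit scheme in Theorem 3.2, but to exploit the fact that in the implicit case the upwind differences live at the new time level, so the diagonal coefficient is automatically bounded below by $1$ and no CFL restriction enters. I would take two numerical solutions $U^n_j$ and $V^n_j$ of \eqref{imfirst} sharing the same memory coefficients $c^{n+1}_k$, write each in the reformulated shape \eqref{reform2}, and subtract. Applying the mean value theorem to each flux difference $f^{\pm}(U^{n+1}_{\cdot})-f^{\pm}(V^{n+1}_{\cdot})$, exactly as in the derivation of \eqref{reform2}, gives for the difference $W^m_j:=U^m_j-V^m_j$ a linear relation
\[
(1+\beta^+_j-\beta^-_j)\,W^{n+1}_j-\beta^+_{j-1}W^{n+1}_{j-1}+\beta^-_{j+1}W^{n+1}_{j+1}=\sum_{k=0}^n c^{n+1}_k W^k_j,
\]
where $\beta^{\pm}_j=\frac{\tau^\alpha}{h\,\Gamma(2-\alpha)}(f^{\pm})'(\cdot)$ are evaluated at intermediate points and therefore satisfy $\beta^+_j\ge 0$ and $\beta^-_j\le 0$ by \eqref{cond:flux}.

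The crucial sign structure is that the diagonal coefficient obeys $1+\beta^+_j-\beta^-_j\ge 1>0$, while the two off-diagonal coefficients $\beta^+_{j-1}$ and $-\beta^-_{j+1}$ are nonnegative; this is precisely what implicit upwinding buys, and it holds with no constraint linking $\tau$ and $h$. Moving the off-diagonal terms to the right and taking absolute values, the triangle inequality yields
\[
(1+\beta^+_j-\beta^-_j)\,|W^{n+1}_j|\le \beta^+_{j-1}|W^{n+1}_{j-1}|+(-\beta^-_{j+1})|W^{n+1}_{j+1}|+\sum_{k=0}^n c^{n+1}_k|W^k_j|.
\]
Summing over $j$ and reindexing the two off-diagonal sums, so that $\beta^+_{j-1}|W^{n+1}_{j-1}|\mapsto\beta^+_j|W^{n+1}_j|$ and likewise for the $\beta^-$ term, the quantity $\sum_j(\beta^+_j-\beta^-_j)|W^{n+1}_j|$ appears on both sides and cancels, leaving the clean recursion
\[
\|W^{n+1}\|_{\ell^1}\le\sum_{k=0}^n c^{n+1}_k\,\|W^k\|_{\ell^1}.
\]
Since $c^{n+1}_k\ge 0$ and $\sum_{k=0}^n c^{n+1}_k=1$ by \eqref{cond:ck}, the right-hand side is a convex combination of the past $\ell^1$ distances, and a straightforward induction on $n$ gives $\|W^n\|_{\ell^1}\le\|W^0\|_{\ell^1}$ for all $n$, i.e.\ unconditional $\ell^1$ contraction.

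I expect the main obstacle to lie not in this algebraic cancellation, which is routine once the sign pattern is isolated, but in making the argument legitimate at the new time level: unlike the explicit case, the coefficients $\beta^{\pm}_j$ depend on the implicitly defined values $U^{n+1}_j$ and $V^{n+1}_j$ themselves. I would therefore first observe that \eqref{reform2} defines, at each step, a diagonally dominant monotone (M-matrix type) nonlinear system, whose solution exists and is unique, so that $U^{n+1}$ and $V^{n+1}$ are well defined before the mean value theorem is invoked. The contraction estimate then uses only the signs $(f^+)'\ge 0$ and $(f^-)'\le 0$, not the particular intermediate points, so it is insensitive to this dependence. A secondary technical point is that cancelling $\sum_j(\beta^+_j-\beta^-_j)|W^{n+1}_j|$ from both sides requires this quantity to be finite, which follows from $\|W^{n+1}\|_{\ell^1}<\infty$ and boundedness of $(f^{\pm})'$; and the reindexing of the sums over $j$ is exact on $\Z$ or under periodic boundary conditions but would generate boundary contributions on the truncated grid $[a,b]$, so I would state the result on the whole line to keep the telescoping clean.
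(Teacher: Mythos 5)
Your proposal is correct and follows essentially the same route as the paper's proof: subtract the two solutions, apply the mean value theorem, exploit the signs $(f^+)'\ge 0$, $(f^-)'\le 0$ so that the diagonal coefficient at level $n+1$ dominates, take absolute values (equivalently, multiply by $\mathrm{Sgn}(U^{n+1}_j-V^{n+1}_j)$ as the paper does), and let the flux terms telescope after summing over $j$, finishing by induction with $\sum_k c^{n+1}_k=1$. Your added remarks on solvability of the implicit nonlinear system and on boundary/summability issues are sensible refinements that the paper glosses over, but they do not change the argument.
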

\begin{proof}
We further rewrite the scheme \eqref{reform2} as
\begin{equation}\label{reform3}
U_j^{n+1}-\sum_{k=0}^{n} c^{n+1}_k U_j^k= -\delta \left(f^+ (U_{j-1}^{n+1})- f^+(U_{j-2}^{n+1})\right) -\delta \left(f^-(U_{j+1}^{n+1})-f^-(U_j^{n+1})\right) .
\end{equation}
We recall for convenience that $\delta=\frac{\tau^\alpha}{h \Gamma(2-\alpha)}$. And we assume ${V^n_j}$ also gives the numerical solution to the same conservation law, which is also given by the implicity upwind scheme
\begin{equation}\label{reform4}
V_j^{n+1}-\sum_{k=0}^{n} c^{n+1}_k V_j^k= -\delta \left(f^+ (V_{j-1}^{n+1})- f^+(V_{j-2}^{n+1})\right) -\delta \left(f^-(V_{j+1}^{n+1})-f^-(V_j^{n+1})\right) .
\end{equation}


By subtracting the above formula from the original scheme, and by applying mean value theorem, we get
\begin{multline*}
 U_j^{n+1} -V_{j}^{n+1} +\delta (f^+)'(\xi_j^+) \left(U^{n+1}_j-V^{n+1}_{j}\right) -  \delta (f^-)'(\xi_j^-)  \left(U^{n+1}_{j}-V^{n+1}_{j}\right) = \\
 \sum_{k=0}^{n} c^{n+1}_k \left( U_j^k- V_{j}^k\right) -  \delta (f^-)'(\xi_{j-1}^-)  \left(U^{n+1}_{j+1}-V^{n+1}_{j+1}\right)  -\delta (f^+)'(\xi_{j-1}^+)  \left(U^{n+1}_{j-1}-V^{n+1}_{j-1}\right),
 \end{multline*}
 where $\xi_j^+$ and $\xi_j^-$ are respectively some numbers between $U^{n+1}_j$ and $V^{n+1}_j$.

    
     Multiply each side by $\mbox{Sgn} \left(U^{n+1}_j-V^{n+1}_{j}\right) $,  and sum over $j$, we get
 \begin{align*}
 \mbox{L.H.S.} & = \sum_j \left|U^{n+1}_j-V^{n+1}_{j} \right| + \sum_j  \delta (f^+)'(\xi_j^+)  \left|U^{n+1}_j-V^{n+1}_{j}\right| -  \sum_j \delta (f^-)'(\xi_j^-)  \left(U^{n+1}_{j}-V^{n+1}_{j}\right)  \\
 & =  \sum_j \left|U^{n+1}_j-V^{n+1}_{j} \right| + \delta \sum_j \left( \left|\delta (f^+)'(\xi_j^+)  \left(U^{n+1}_j-V^{n+1}_{j}\right)\right| +  \left| (f^-)'(\xi_j^-)  \left(U^{n+1}_{j}-V^{n+1}_{j}\right) \right| \right) \\
 &= \sum_j \left|U^{n+1}_j-V^{n+1}_{j} \right|  +  \delta \sum_j   \left[\left| f^+\left( U^{n+1}_j\right)- f^+\left( V^{n+1}_{j}\right) \right|+  \left| f^-\left( U^{n+1}_j\right)- f^+\left( V^{n+1}_{j}\right) \right|\right].
 \end{align*}
 For the right hand side, by triangle inequality, we obtain,
   \begin{align*}
    \mbox{R.H.S.} & \le \sum_j \sum_{k=0}^{n} c^{n+1}_k \left| U_j^k- V_{j}^k\right| +\delta \sum_j \left( \left|(f^+)'(\xi_{j-1}^+)  \left(U^{n+1}_{j-1}-V^{n+1}_{j-1}\right)\right| +  \left| (f^-)'(\xi_{j+1}^-)  \left(U^{n+1}_{j+1}-V^{n+1}_{j+1}\right) \right| \right)   \\
 &=  \sum_j \sum_{k=0}^{n} c^{n+1}_k \left| U_j^k- V_{j}^k\right| +  \delta \sum_j  \left[\left| f^+\left( U^{n+1}_j\right)- f^+\left( V^{n+1}_{j}\right) \right|+  \left| f^-\left( U^{n+1}_j\right)- f^+\left( V^{n+1}_{j}\right) \right|\right].
    \end{align*}
    
Thus, we conclude that
\[
\sum_j \left|U^{n+1}_j-V^{n+1}_{j} \right|  \le \sum_j \sum_{k=0}^{n} c^{n+1}_k \left| U_j^k- V_{j}^k\right|.
\]
With an induction argument, we get,
\[
 \|U^n-V^n\|_{\ell^1} \le \|U^0-V^0\|_{\ell^1},
 \] 
namely, the implicit upwind method is $\ell^1$ contracting.
\end{proof}
 Similar to the previous case, as an immediate result, we obtain
\begin{cor} 
the implicit upwind scheme \eqref{imfirst} for the conservation law \eqref{eq:main} is unconditionally TVD.
\end{cor}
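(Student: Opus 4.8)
The plan is to deduce this corollary directly from the $\ell^1$ contraction property established in the preceding Theorem, mirroring the argument already used for the first order explicit scheme. The key structural observation is that the implicit upwind scheme \eqref{imfirst} is invariant under spatial translation: the discrete operator $D_t^\alpha$ acts only in time, with memory coefficients $c_k^{n+1}$ that are independent of the grid index $j$, and the flux differences apply the same splitting $f = f^+ + f^-$ and the same stencil at every node. Consequently, if $\{U_j^n\}$ solves \eqref{imfirst}, then the spatially shifted sequence $V_j^n := U_{j+1}^n$ solves \eqref{imfirst} as well.

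First I would verify this invariance explicitly. Substituting $V_j^{n+1} = U_{j+1}^{n+1}$ (and the correspondingly shifted history $V_j^k = U_{j+1}^k$ for $k = 0,\dots,n$) into \eqref{imfirst} reproduces exactly the scheme satisfied by $U$ at the index $j+1$, which holds by hypothesis for every $j$. Hence $V$ is a legitimate second numerical solution of the same scheme, so the hypotheses of the Theorem are fulfilled for the pair $(U,V)$, with no restriction on $\tau$ or $h$.

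Next I would apply the unconditional $\ell^1$ contraction bound to this pair. Since the absolute values are insensitive to the order of the two arguments, we have
\[
\|U^n - V^n\|_{\ell^1} = \sum_j \left| U_j^n - U_{j+1}^n \right| = \mbox{TV}\left[U^n\right],
\]
so the contraction estimate $\|U^n - V^n\|_{\ell^1} \le \|U^0 - V^0\|_{\ell^1}$ translates immediately into $\mbox{TV}\left[U^n\right] \le \mbox{TV}\left[U^0\right]$ for all $n \in \N^+$, which is precisely the claimed unconditional TVD property.

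The only point requiring care — rather than a genuine obstacle — is the translation invariance step: one must confirm that shifting the data consistently at all history levels preserves the full memory sum, and that the flux splitting is applied identically at every node. Because the scheme has index independent structure and spatially uniform memory coefficients, this holds automatically, and crucially no CFL-type condition is invoked at any stage, so the bound is genuinely unconditional. I would close by noting that, since the estimate holds for arbitrary $n$, the total variation is bounded by its initial value throughout the computation, establishing the corollary.
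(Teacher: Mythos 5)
Your argument is exactly the paper's: the corollary is obtained by taking $V_j^n = U_{j+1}^n$ in the unconditional $\ell^1$ contraction theorem and observing that $\|U^n - V^n\|_{\ell^1} = \mathrm{TV}[U^n]$, with translation invariance of the scheme justifying that the shifted sequence is a valid second solution. The proposal is correct and matches the paper's route, only spelling out the translation-invariance check that the paper leaves implicit.
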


\subsubsection{Energy estimate and entropy solutions}
To further investigate the numerical dissipation introduced when approximating the Caputo derivative, we apply the following energy method, and show that the implicit upwind  method is unconditionally $l^2$ stable for the linear advection equation, i.e. $f=au$. For simplicity, we take $a>0$, then the right hand side of  \eqref{reform2} reduces to
\[
\mbox{R.H.S.} = -\lambda \left(U^{n+1}_j-U^{n+1}_{j-1}\right) ,
\]
where $\lambda= a \tau^ \alpha/ (h C_\alpha)$.

Multiply equation \eqref{reform2} by $U_j^{n+1}$, and sum over $j$, we get that
\begin{align*}
\mbox{L.H.S.} &=\sum_j  U_j^{n+1} \left( U_j^{n+1}-\sum_{k=0}^{n} c^{n+1}_k U_j^k \right) \\
&= \sum_j \sum_{k=0}^{n} c^{n+1}_k  \left[ \left( U_j^{n+1} \right)^2 -U_j^{n+1}U_j^{k}  \right] \\
&= \frac{1}{2} \sum_j \sum_{k=0}^{n}  c^{n+1}_k   \left[ \left( U_j^{n+1} \right)^2 + \left( U_j^{n+1}-U_j^{k} \right)^2 -  \left( U_j^{k} \right)^2  \right] \\
& = \frac{1}{2}  \sum_j \left( U_j^{n+1} \right)^2 + \frac{1}{2} \sum_j \sum_{k=0}^{n} c^{n+1}_k \left( U_j^{n+1}-U_j^{k} \right)^2 -  \frac{1}{2} \sum_j \sum_{k=0}^{n}  c^{n+1}_k \left( U_j^{k} \right)^2  \\
& =\frac{1}{2} \| U^{n+1} \|_{l^2}^2 - \frac 1 2 \sum_{k=0}^{n}  c^{n+1}_k \|U^k \|_{l^2}^2 + \frac{1}{2} \sum_j \sum_{k=0}^{n} c^{n+1}_k \left( U_j^{n+1}-U_j^{k} \right)^2.
\end{align*}
and
\begin{align*}
\mbox{R.H.S.} &=\sum_j  -\lambda  U_j^{n+1} \left(U^{n+1}_j-U^{n+1}_{j-1}\right)  \\
& = -\frac \lambda 2 \sum_j \left[ \left( U_j^{n+1}\right)^2 -2 U_j^{n+1} U_{j-1}^{n+1}+\left( U_{j-1}^{n+1}\right)^2 \right] \\
& = -\frac \lambda 2 \sum_j \left(  U_j^{n+1} - U_{j-1}^{n+1} \right)^2.
\end{align*}

Therefore, we obtain that,
\[
\| U^{n+1} \|_{l^2}^2 + \sum_j \sum_{k=0}^{n} c^{n+1}_k \left( U_j^{n+1}-U_j^{k} \right)^2+ \lambda  \sum_j \left(  U_j^{n+1} - U_{j-1}^{n+1} \right)^2 = \sum_{k=0}^{n}  c^{n+1}_k \|U^k \|_{l^2}^2.
\]
 With an induction argument, we can easily show that for $n\in \N^+$, $\| U^{n} \|_{l^2}^2  \le   \| U^{0} \|_{l^2}^2 $, and hence, we conclude the following estimate,
\[
 \| U^{n} \|_{l^2}^2 + \sum_j  \sum_{k=0}^{n-1} c^{n}_k \left( U_j^{n}-U_j^{k} \right)^2+ \lambda \sum_j  \left(  U_j^{n} - U_{j-1}^{n} \right)^2 \le   \| U^{0} \|_{l^2}^2 .
\]
We remark that the second term of the left hand side corresponds to the damping effect of the fractional time derivative, and the third term of the left hand side corresponds to the numerical dissipation by the upwind method.


At last, we want to verify that, the implicit upwind method also satisfies the entropy condition for the linear advection equation. Assume that $\eta(u)$ is a convex entropy function and $\psi(u)$ is its entropy flux function. For the linear advection equation, we have clearly $\psi(u)=a \eta(u)$. Without loss of generality, we take $a>0$, and we rewrite the implicit upwind scheme in the following way,
\[
U_j^{n+1}  +\lambda U^{n+1}_j=\sum_{k=0}^{n} c^{n+1}_k U_j^k  +\lambda U^{n+1}_{j-1},
\]
where   $\lambda= a \tau^ \alpha/ (h C_\alpha)$. Then, by dividing each side by $1+\lambda$, we get,
\[
U_j^{n+1}=\sum_{k=0}^{n} \frac{c^{n+1}_k}{1+\lambda} U_j^k  +\frac{\lambda}{1+\lambda} U^{n+1}_{j-1}.
\]
Clearly, the right hand side gives a convex combination since 
\[
\sum_{k=0}^{n} c^{n+1}_k + \lambda = 1+ \lambda, \quad c_k^{n+1}>0.
\]
Then, the convexity of the entropy function implies,
\[
\eta \left( U_j^{n+1} \right)=\eta \left( \sum_{k=0}^{n} \frac{c^{n+1}_k}{1+\lambda} U_j^k  +\frac{\lambda}{1+\lambda} U^{n+1}_{j-1} \right) \le \sum_{k=0}^{n} \frac{c^{n+1}_k}{1+\lambda}  \eta \left( U_j^k \right) +\frac{\lambda}{1+\lambda}\eta \left( U^{n+1}_{j-1} \right).
\]
Then, if sum up the above inequality over $j$ and denote $\eta(U^n)= \sum_j \eta(U^N_j)$, then we obtain that,
\[
\eta \left( U^{n+1} \right) \le \sum_{k=0}^{n} {c^{n+1}_k} \eta \left( U^k \right). 
\]
Then, by induction, we conclude that
\[
\eta \left( U^{n+1} \right) \le  \eta \left( U^0 \right),
\]
which implies the discrete entropy does not increase in time.

For general conservation laws with Caputo derivatives, it is possible to show similar results. However, the analytical aspect of entropy solutions to conservation law with the Caputo derivatives is not completely understood yet. Therefore, we would leave the related numerical analysis as one of the possible future directions. But in this work, we would instead conduct various numerical tests.

\section{Numerical Examples}
\subsection{Examples for explicit scheme}
In this section, we will give numerical examples of our first and second order explicit schemes for scalar advection equations. 

First, we consider the following advection equation:
\begin{equation}
\partial_t^\alpha u+\partial_x u = 0,
\end{equation}
with a discontinuous initial data
\begin{equation}
u(x, 0) = \begin{cases}
2, \quad \mbox{if } x < 0, \\
1, \quad \mbox{if } x \geq 0.
\end{cases}
\end{equation}

\subsubsection{Convergence and stability test for the first order scheme}	
For the convergence tests, we will fix $\Delta t = 0.0001$ and compute the solution at time $T = 0.2$. The reference solution is obtained by using a fine mesh with $\Delta x = 0.001$ and $\Delta t = 0.0001$. The measure of error here we use is the $\ell^1$ error which is:
\begin{equation}
\mathrm{error} = \|u(x_j, T) - u_{\mathrm{ref}}\|_{\ell^1}.
\end{equation}
To make a comparison, we will test with $\alpha = 0.8$ and $\alpha = 0.9$ respectively. 
The result is shown in Figure \ref{fig_conv_ex1}:
\begin{figure}[htbp]
	\centering
	\includegraphics[width=0.8\textwidth]{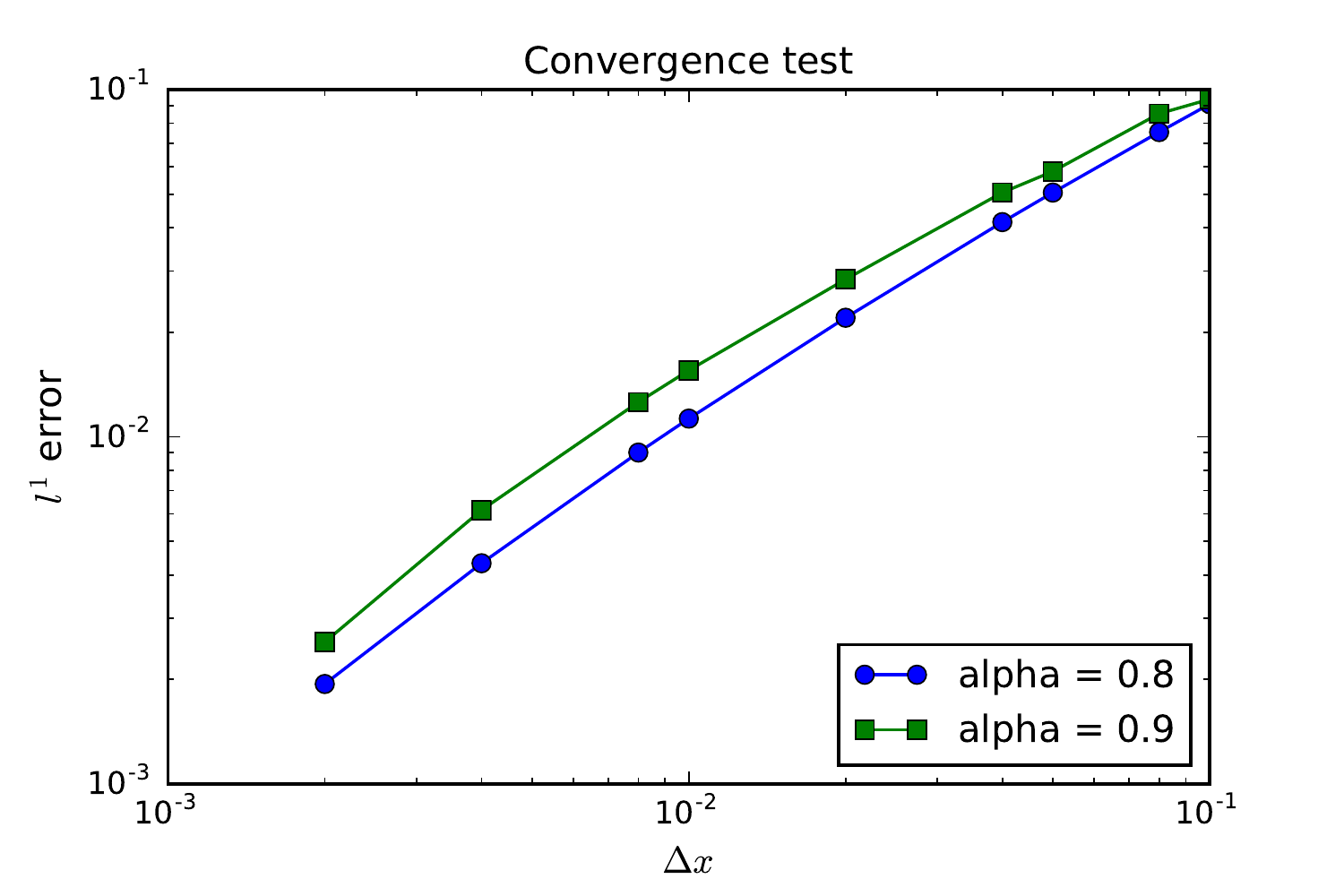}
	\caption{Convergence test shows it is a first order scheme in $\Delta x$ .}
	\label{fig_conv_ex1}
\end{figure}
We can easily verify that it is a first order scheme from this log-log plot.

For the stability tests we will fix $\Delta x = 0.01$ and let $\Delta t$ increase from fine to coarse, we show the empirical critical value of $\Delta t$ which makes our scheme diverges as shown in the lower row of Figure \ref{sta_ex1}, which is compared with results computed with the largest $\Delta t$ satisfying the proposed CFL condition \eqref{CFL}.  We observe that, the stability conditions we derived are basically sharp.

We observe that the constriction on $\Delta t$ is severely strict when using explicit methods, which makes the computation really time consuming. And the stability condition is extremely restricted as $\alpha \rightarrow 0$, and makes the implementation impractical.

\begin{figure}
	\centering
	\subfigure{
		\begin{minipage}[b]{0.305\textwidth}
			\includegraphics[width=1\textwidth]{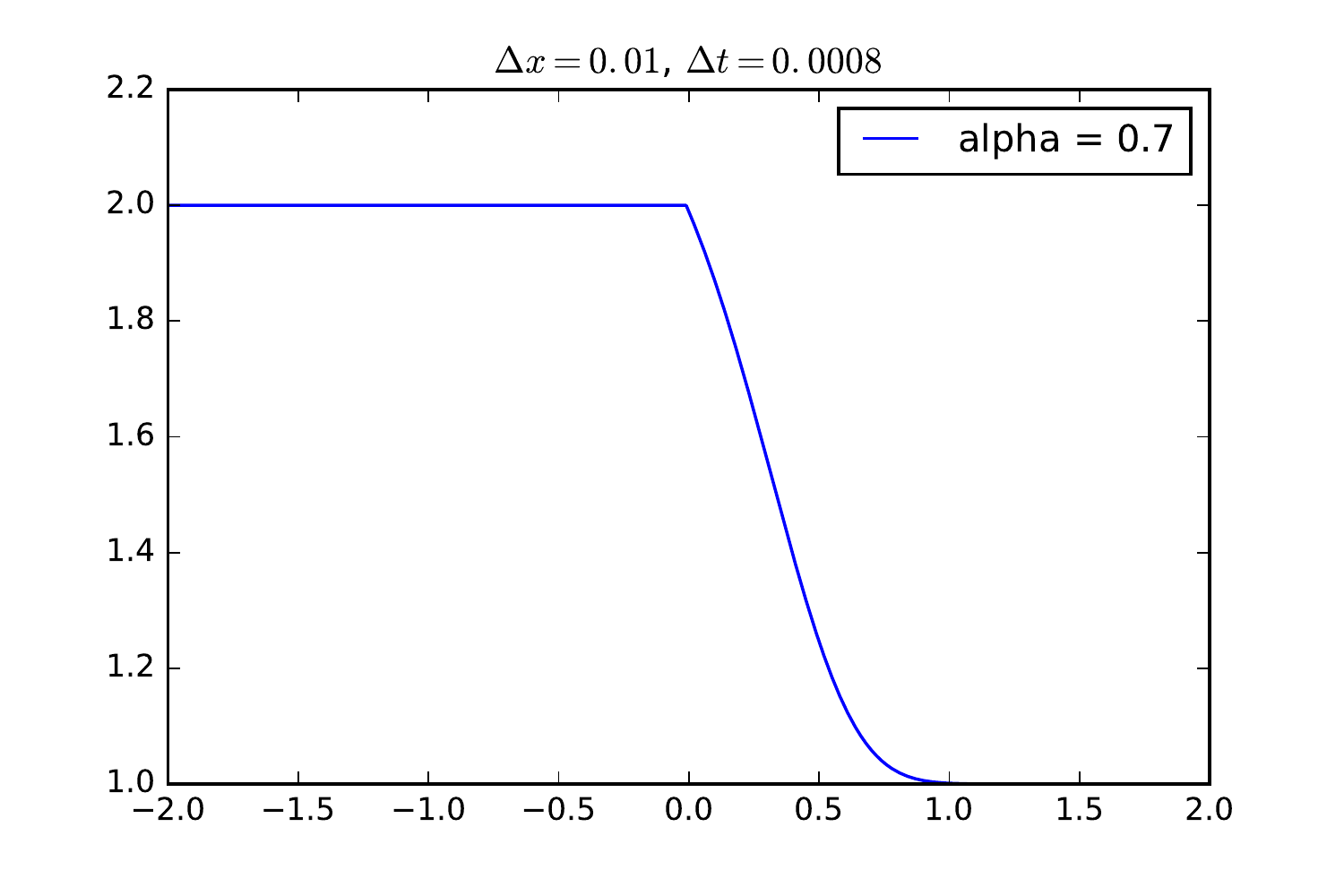} \\
			\includegraphics[width=1\textwidth]{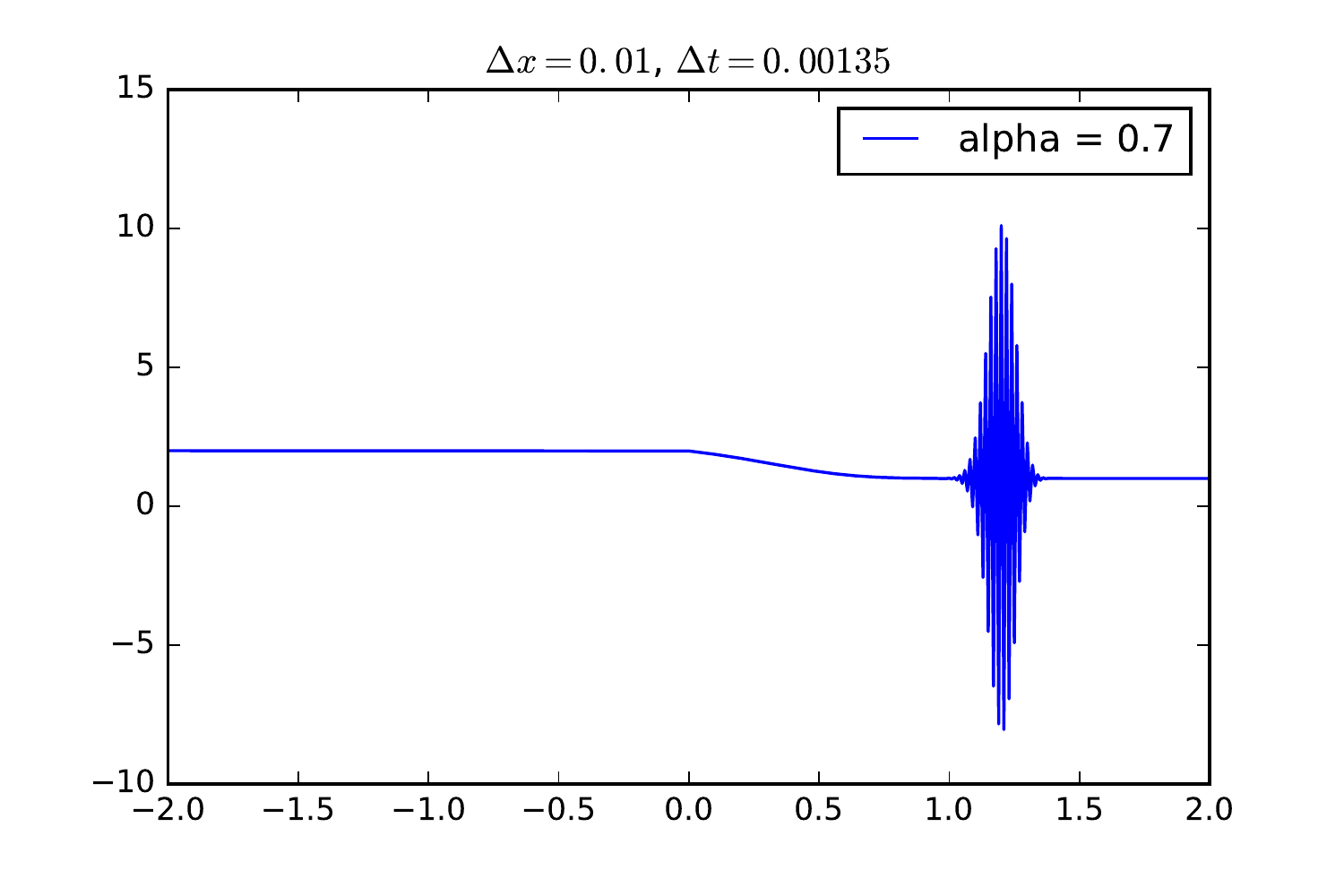}
		\end{minipage}
	}
	\subfigure{
		\begin{minipage}[b]{0.305\textwidth}
			\includegraphics[width=1\textwidth]{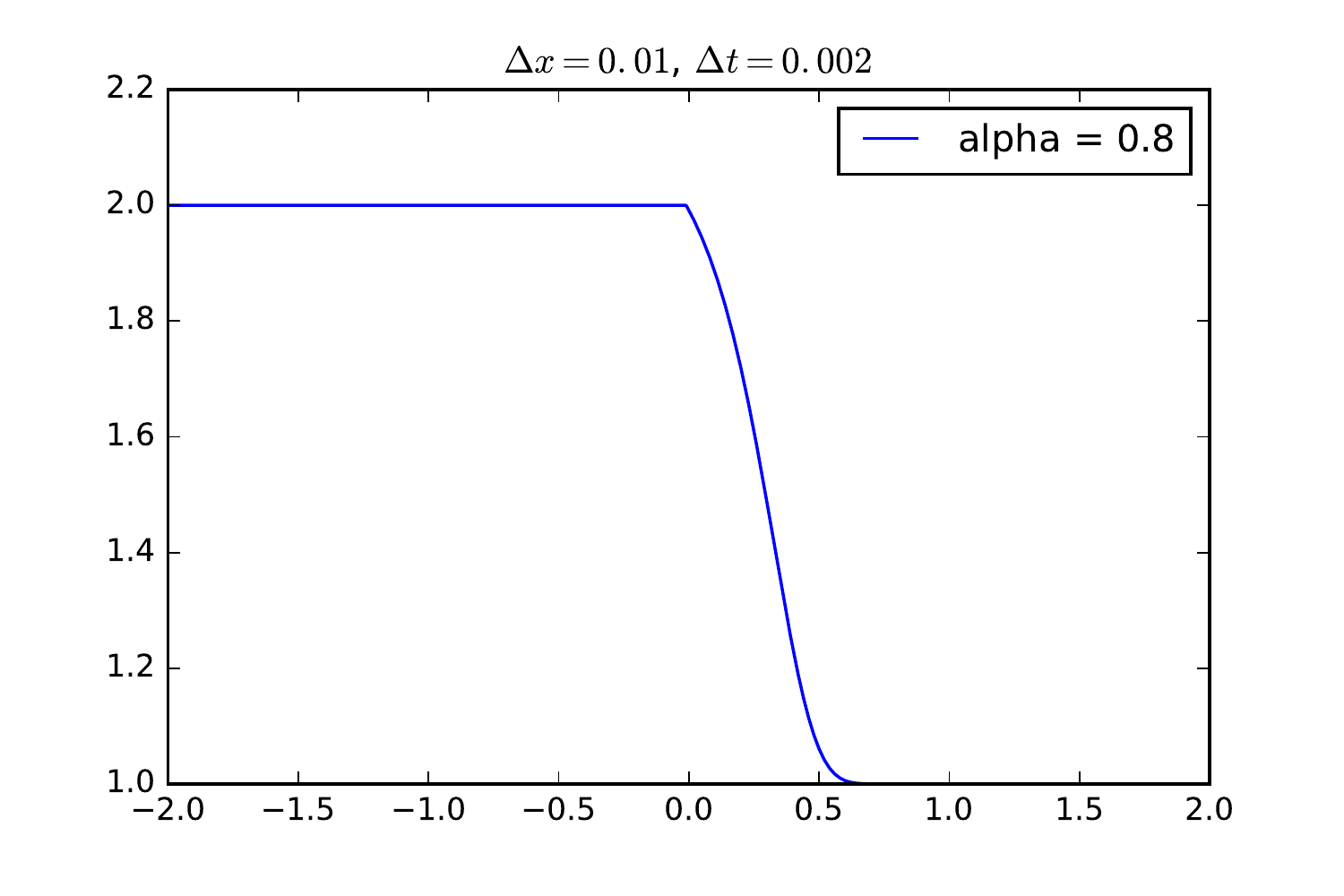} \\
			\includegraphics[width=1\textwidth]{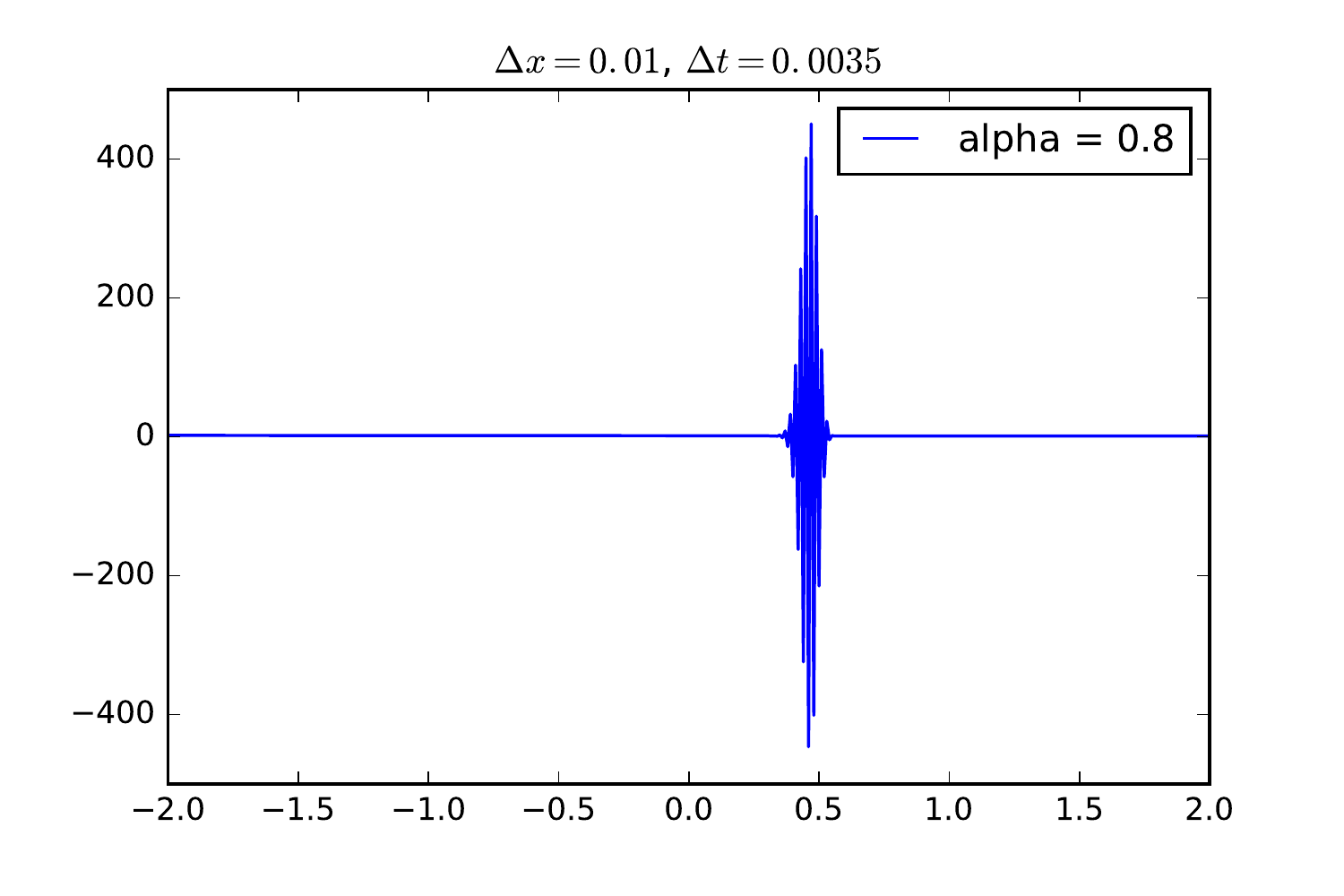}
		\end{minipage}
	}
	\subfigure{
		\begin{minipage}[b]{0.305\textwidth}
			\includegraphics[width=1\textwidth]{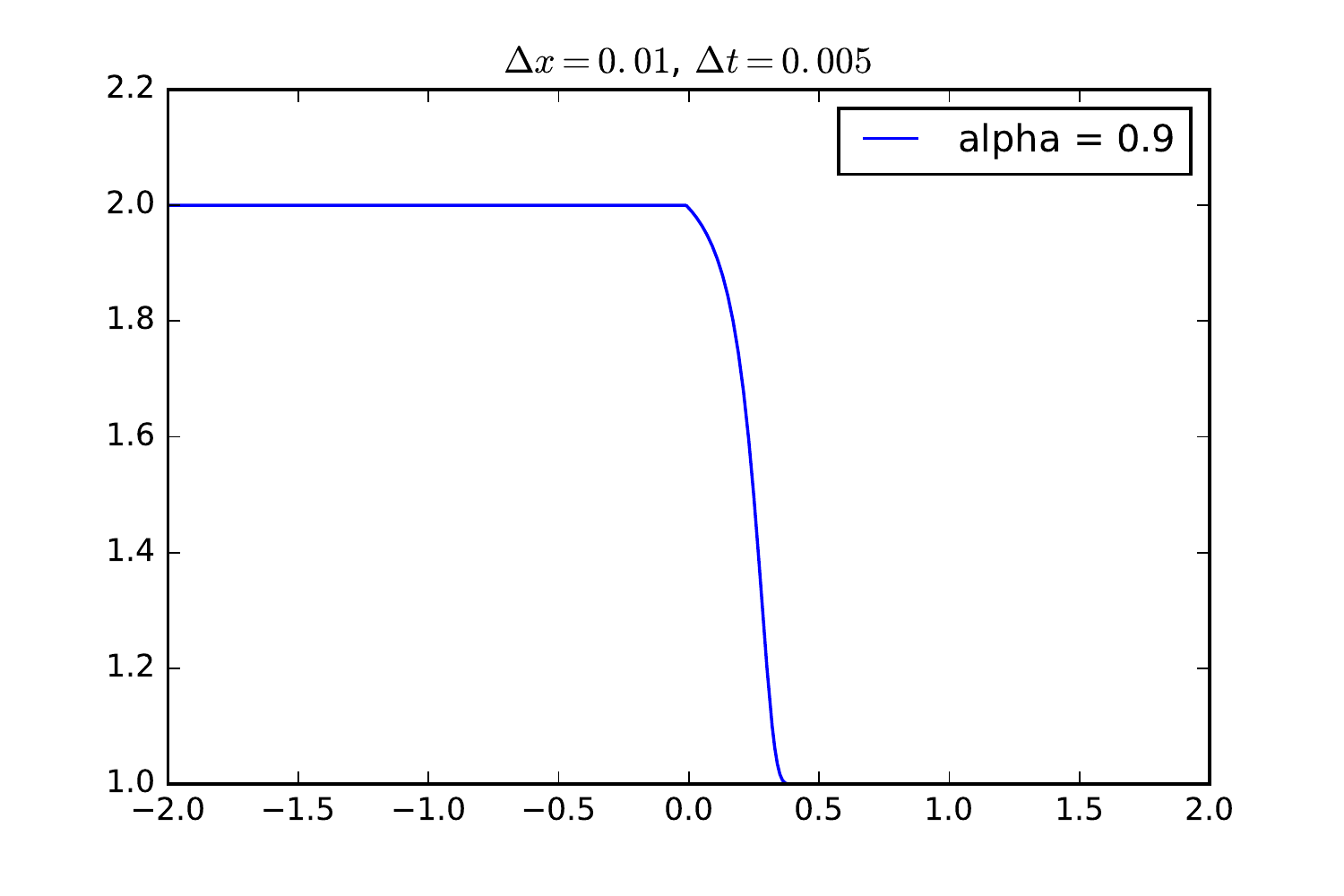} \\
			\includegraphics[width=1\textwidth]{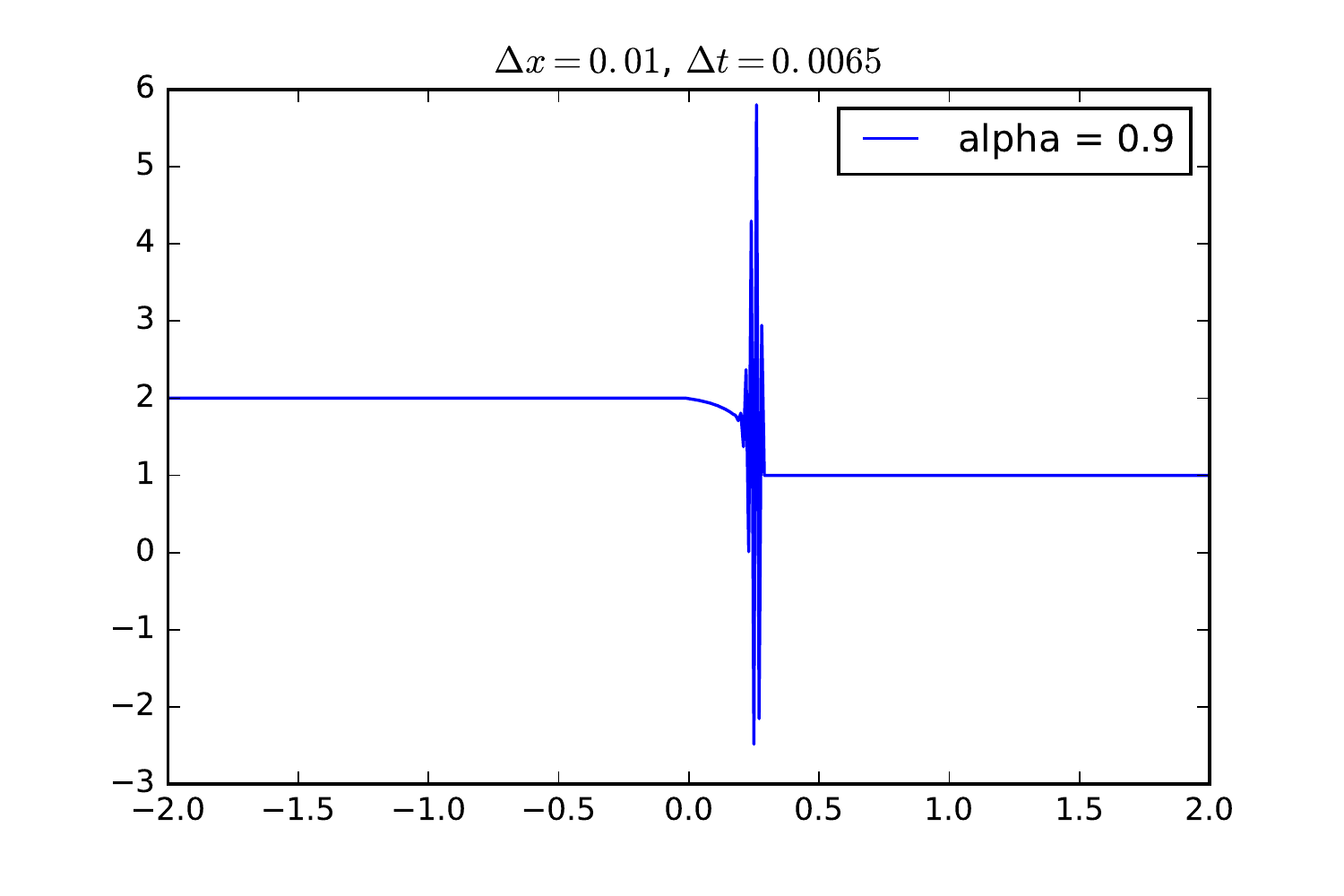}
		\end{minipage}
	}
	\caption{The stability condition test. (a) $\alpha = 0.7$. Up: scheme converges when $\Delta t = 0.0008$. Below: scheme diverges when $\Delta t = 0.00135$. (b) $\alpha = 0.8$. Up: scheme converges when $\Delta t = 0.002$. Below: scheme diverges when $\Delta t = 0.0035$. (c) $\alpha = 0.9$. Up: scheme converges when $\Delta t = 0.005$. Below: scheme diverges when $\Delta t = 0.0065$. 
	}
	\label{sta_ex1}
\end{figure}

\subsubsection{Convergence and stability tests for the second order scheme}	
In this part we basically repeat the numerical tests we did in the first order case. For the convergence test, we still fix $\Delta t = 0.0001$ and compute the solution at time $T = 0.2$. However, to test the second order convergence in space we choose a smooth initial condition instead of a discontinuous one:
\begin{equation}
u(x, 0) = e^{-10 x^2} + 1
\end{equation}
\begin{figure}[htbp]
	\centering
	\includegraphics[width=0.8\textwidth]{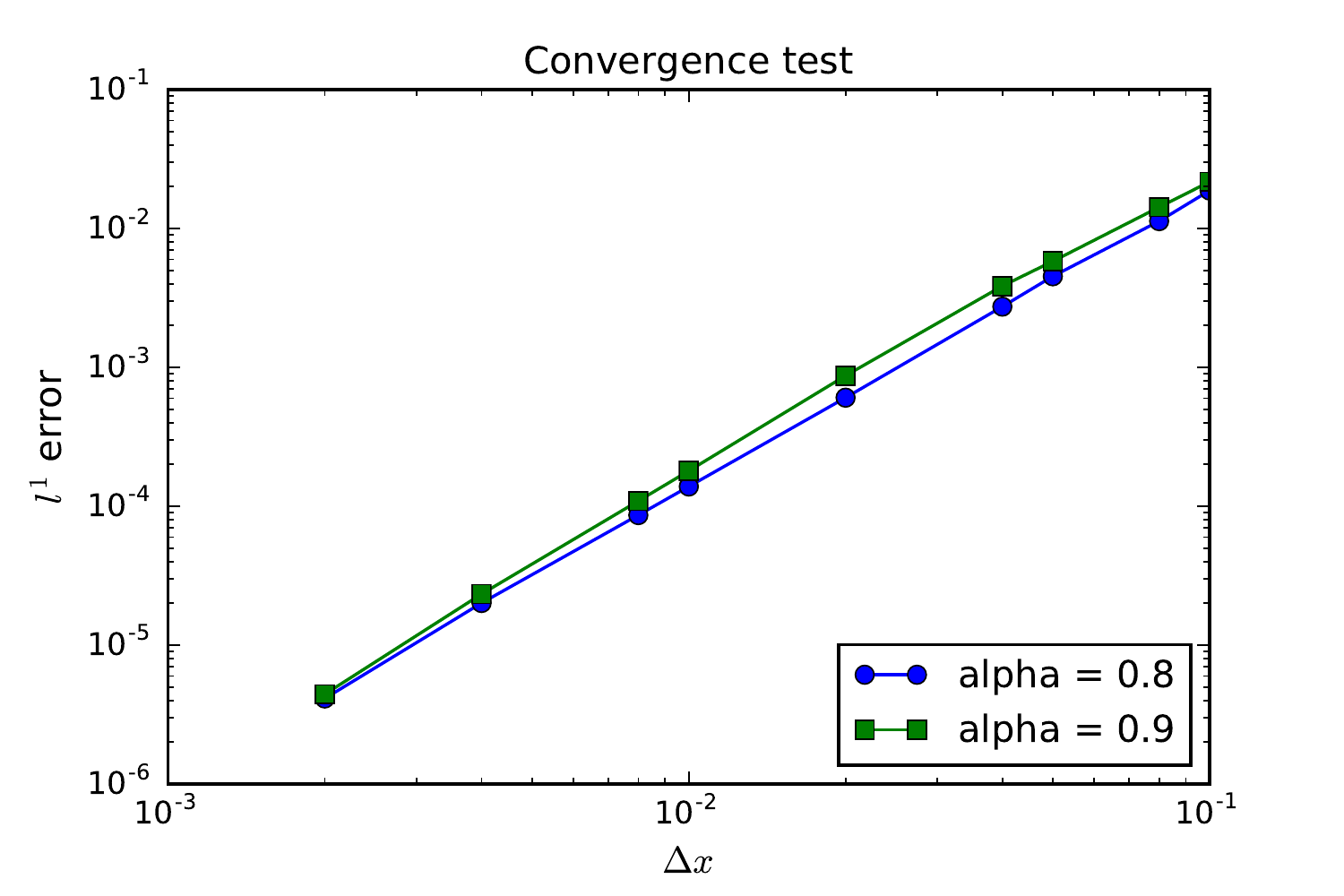}
	\caption{Convergence test shows it is a second order scheme in $\Delta x$.}
	\label{fig_conv_ex1}
\end{figure}
We can easily verify that it is a second order scheme for this log-log plot.

For the stability test we repeat the same experiments as in the previous section. The results is shown in Figure \ref{sta_ex2}.
\begin{figure}
	\centering
	\subfigure{
		\begin{minipage}[b]{0.305\textwidth}
			\includegraphics[width=1\textwidth]{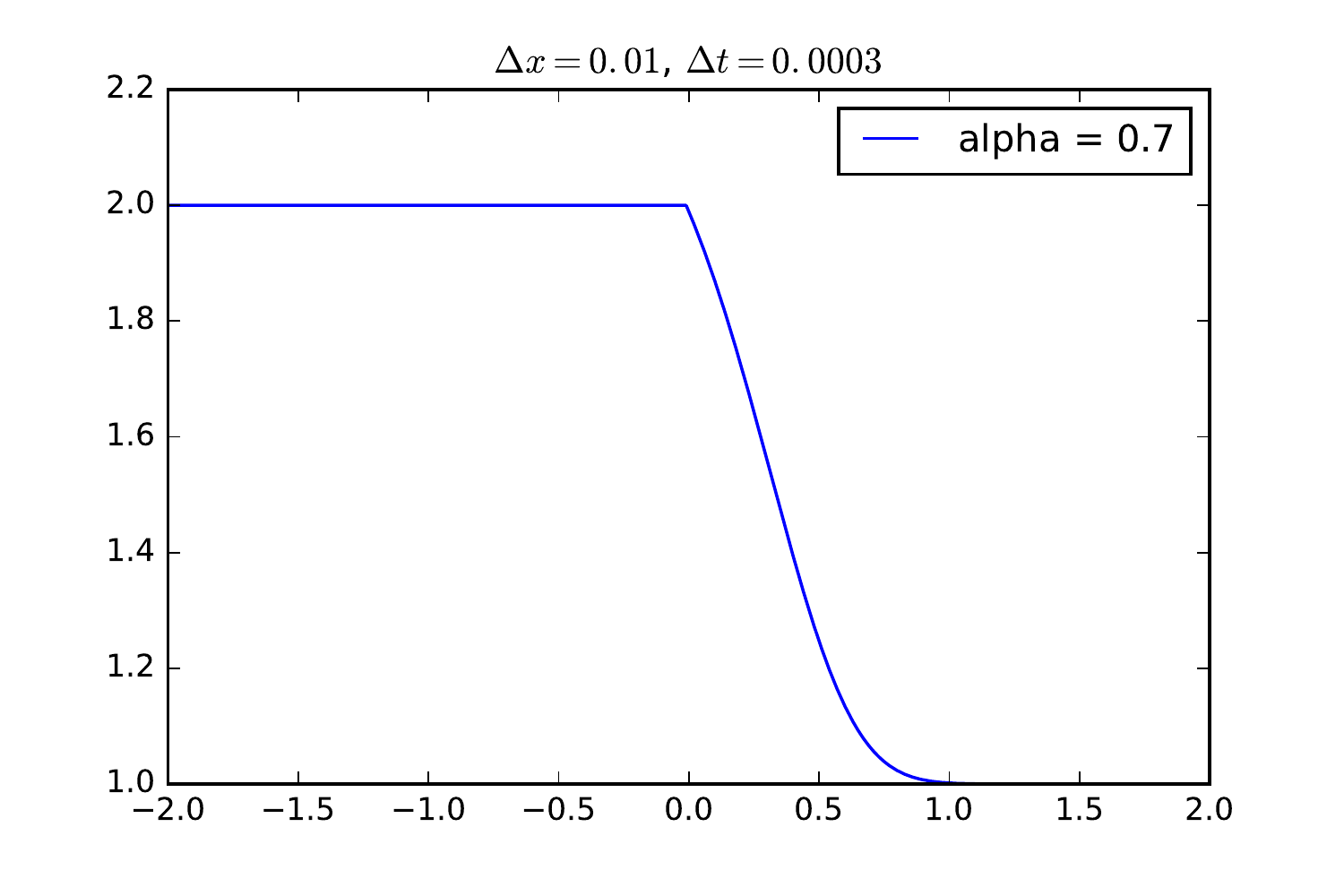} \\
			\includegraphics[width=1\textwidth]{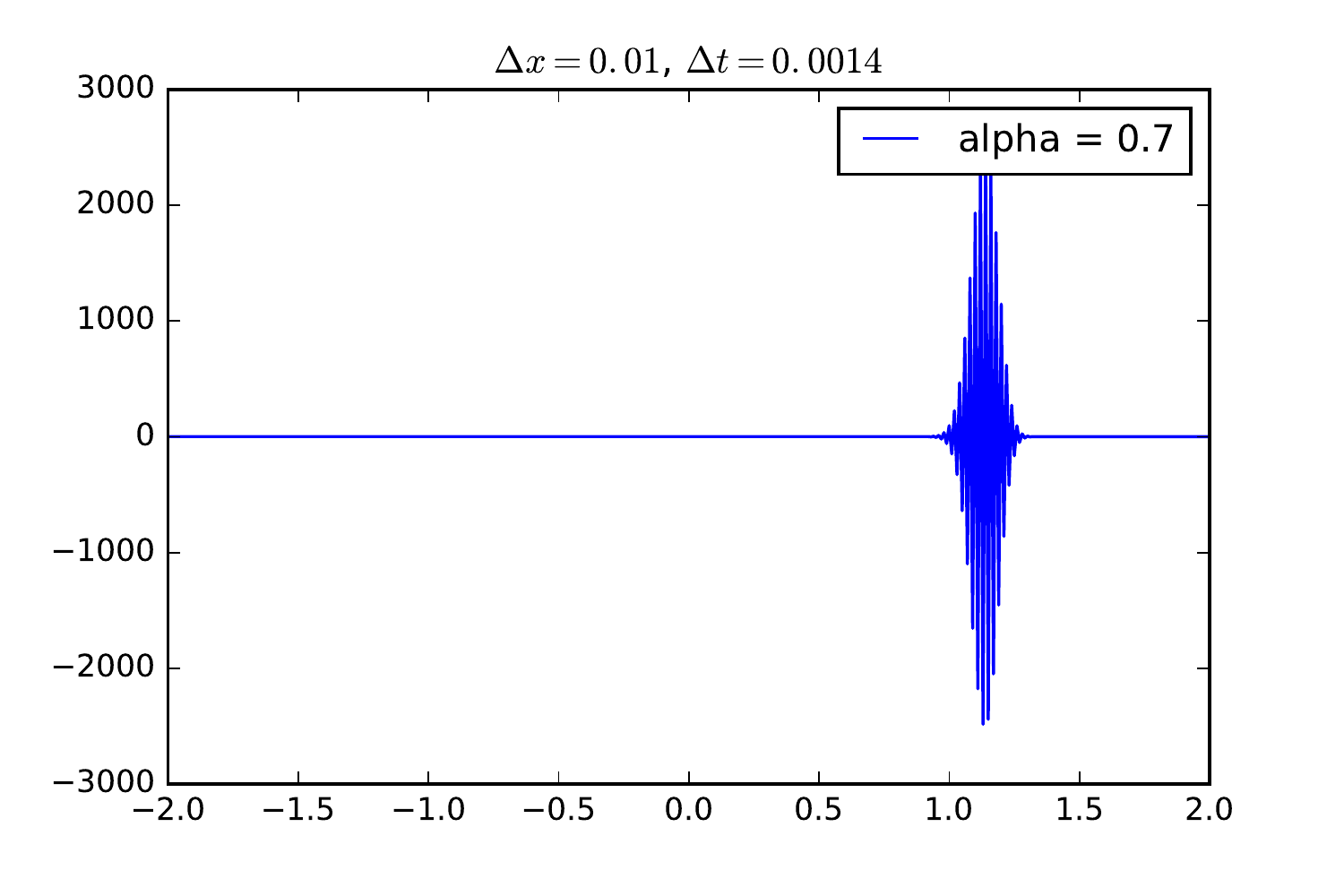}
		\end{minipage}
	}
	\subfigure{
		\begin{minipage}[b]{0.305\textwidth}
			\includegraphics[width=1\textwidth]{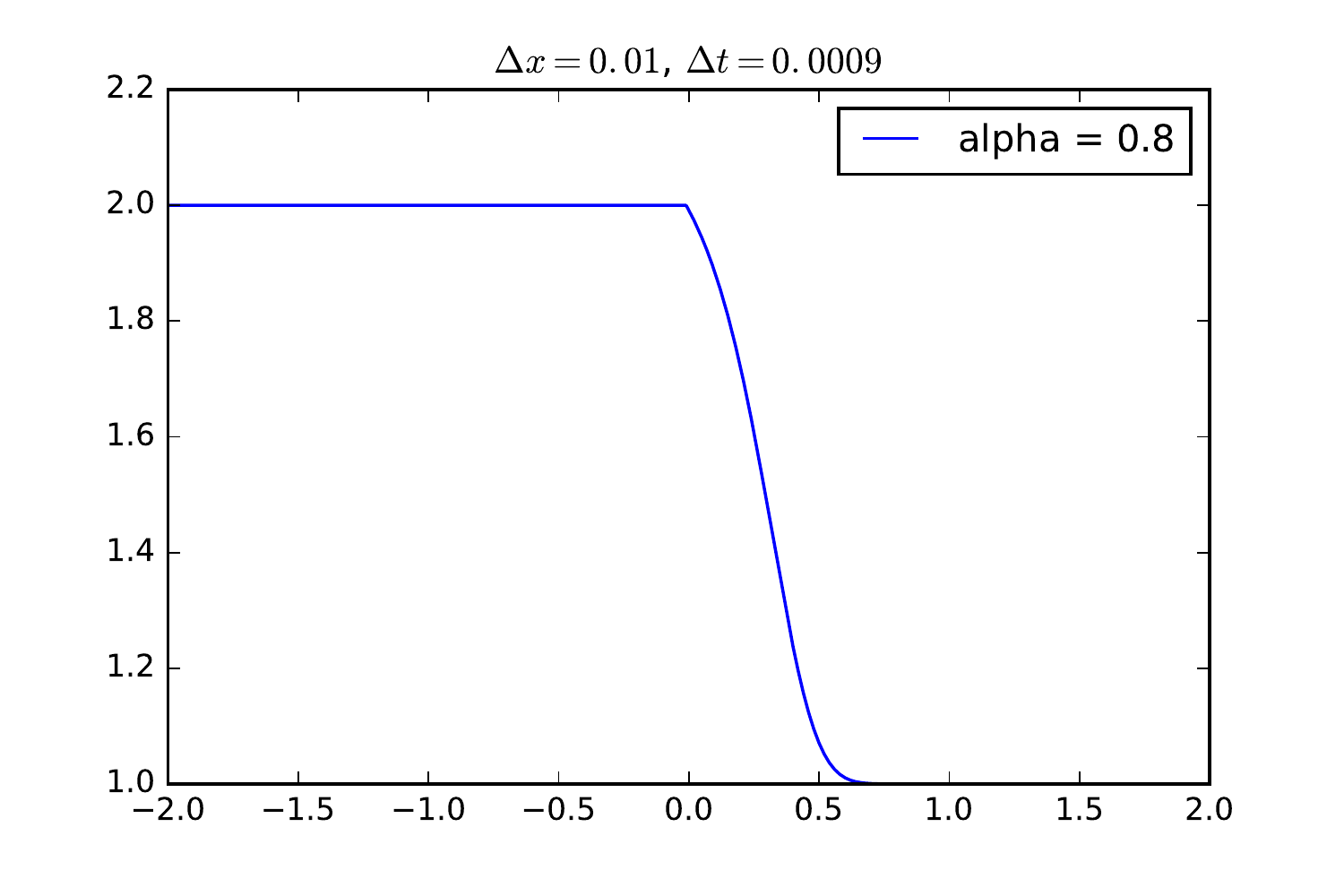} \\
			\includegraphics[width=1\textwidth]{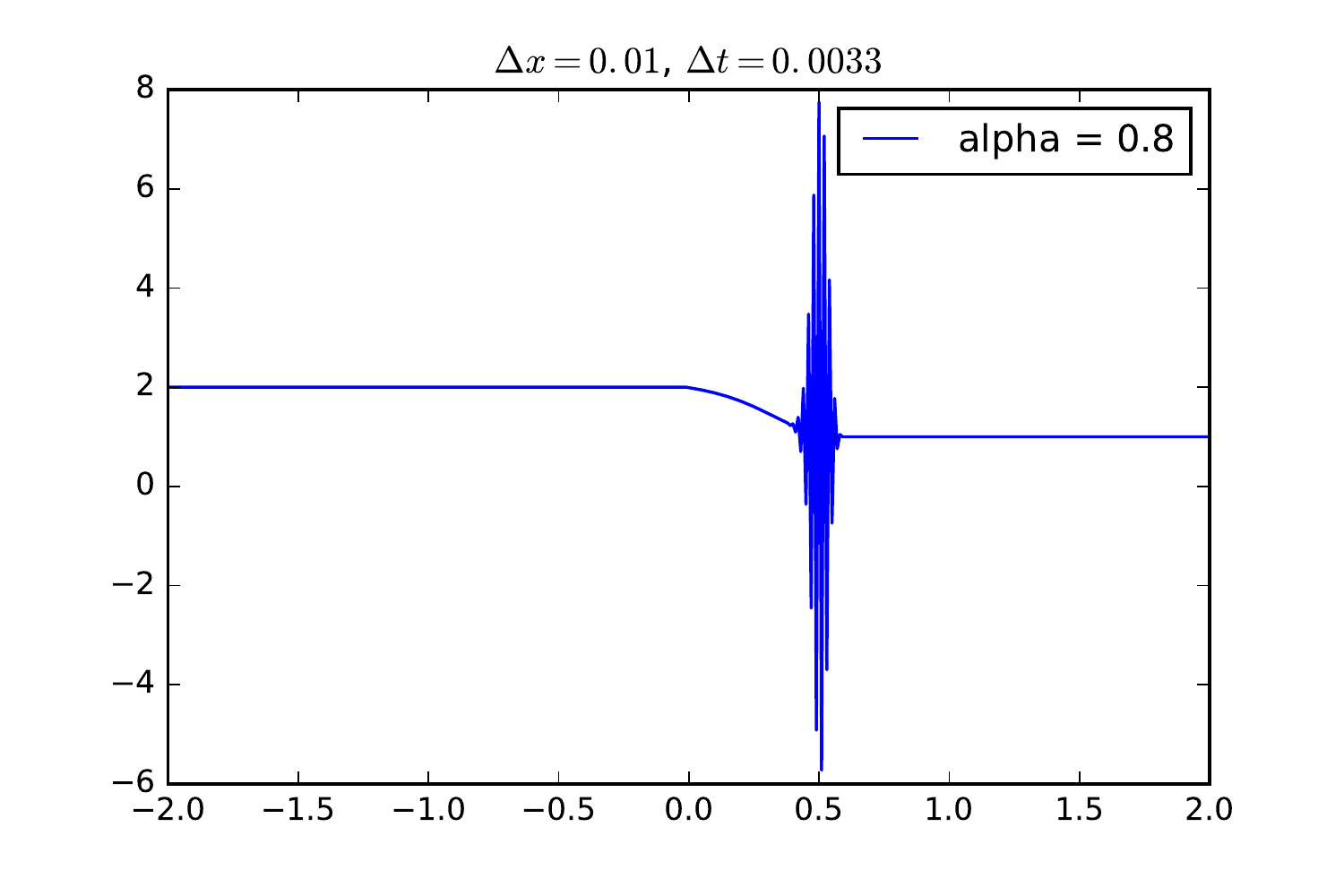}
		\end{minipage}
	}
	\subfigure{
		\begin{minipage}[b]{0.305\textwidth}
			\includegraphics[width=1\textwidth]{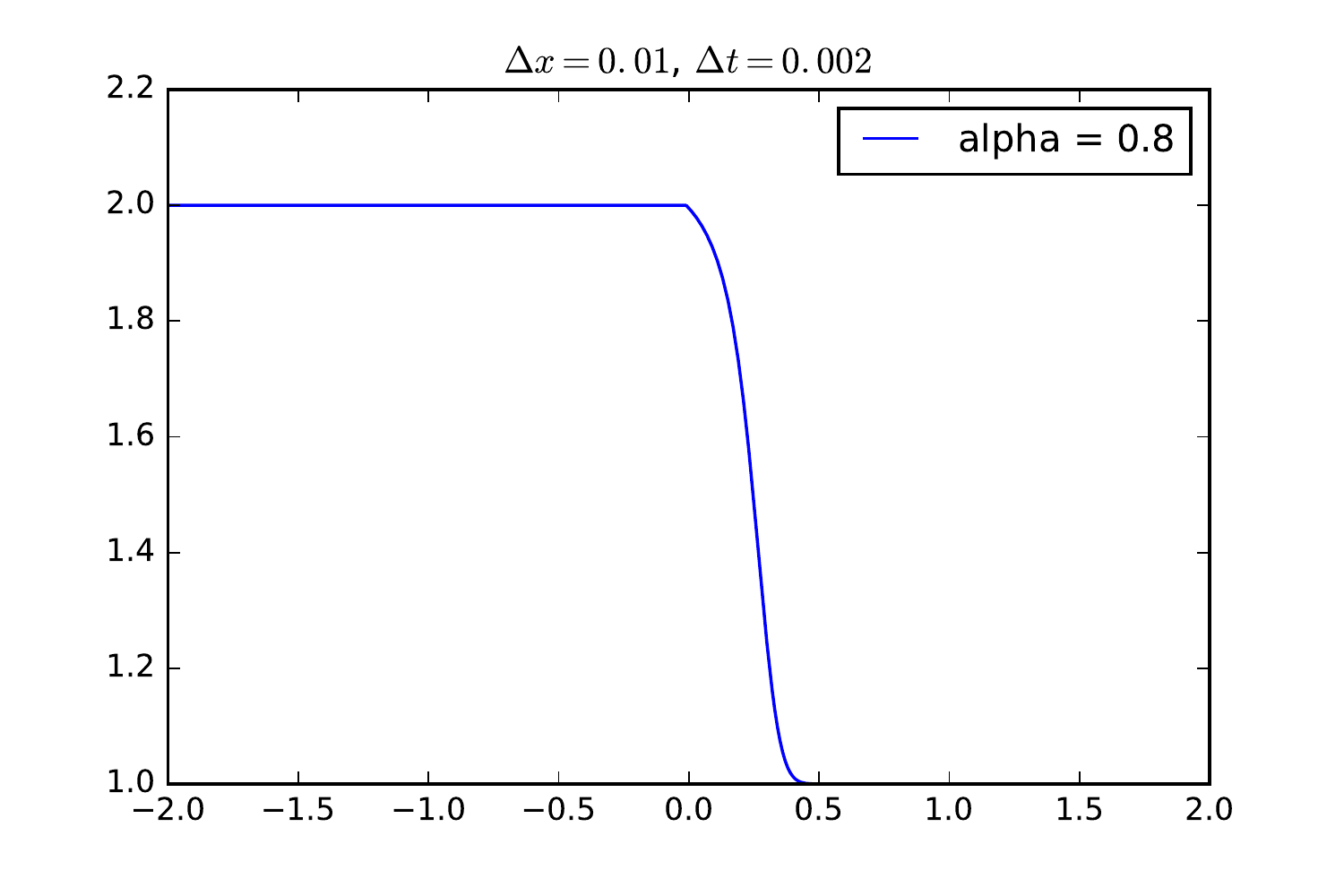} \\
			\includegraphics[width=1\textwidth]{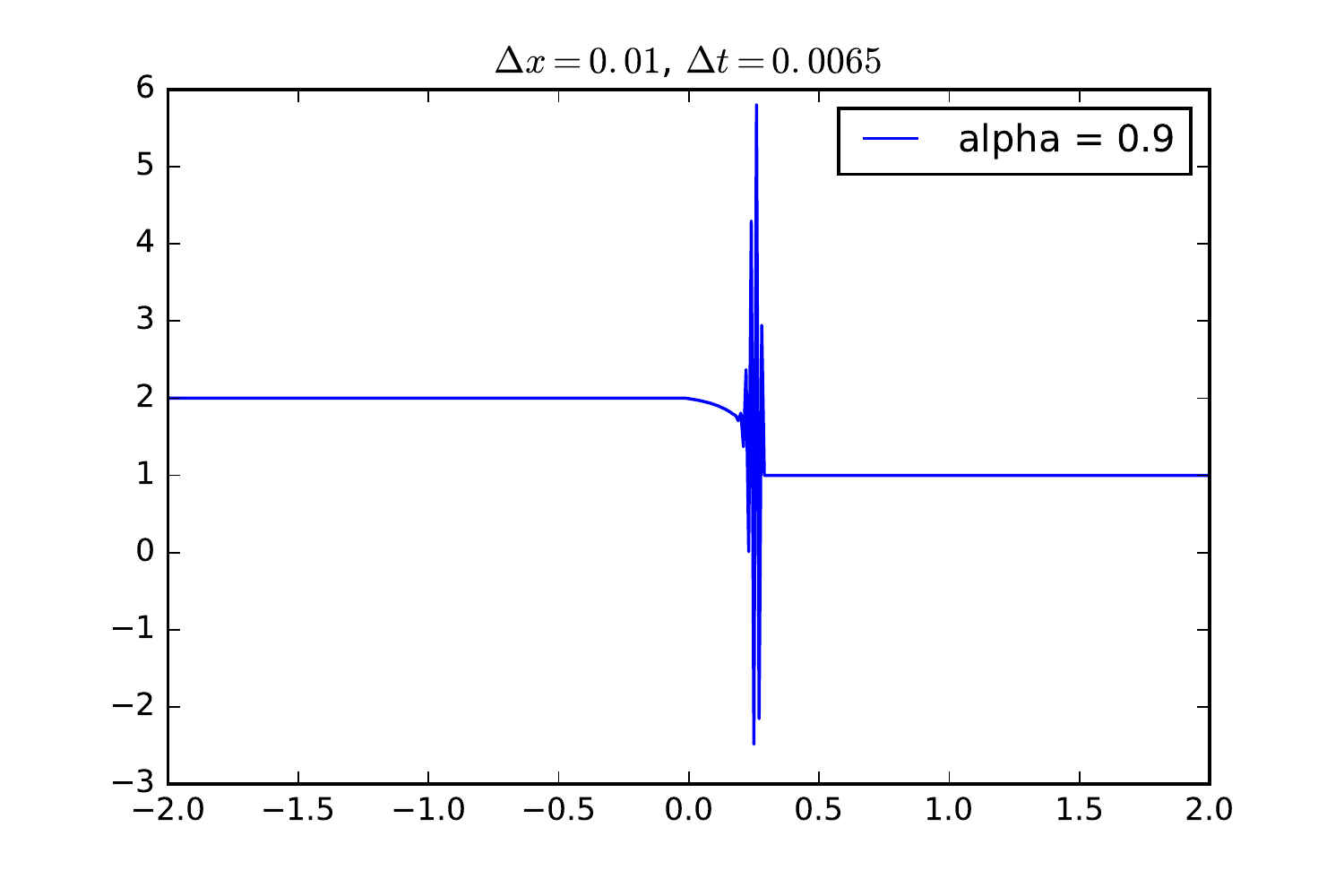}
		\end{minipage}
	}
	\caption{The stability condition test. (a) $\alpha = 0.7$. Up: scheme converges when $\Delta t = 0.0003$. Below: scheme diverges when $\Delta t = 0.0014$. (b) $\alpha = 0.8$. Up: scheme converges when $\Delta t = 0.0009$. Below: scheme diverges when $\Delta t = 0.0033$. (c) $\alpha = 0.9$. Up: scheme converges when $\Delta t = 0.002$. Below: scheme diverges when $\Delta t = 0.0065$.}
	\label{sta_ex2}
\end{figure}
Also We can find the strict constriction on $\Delta t$ when using explicit method which makes the computation really inefficient. 

To conclude this section, we remark that, we have carried out the same tests for the Burgers' equation, and similar results have been obtained, which we would skip in this paper.

%
%

\subsection{Examples for implicit scheme}
In the previous section, we show that it is nearly infeasible to use an explicit scheme for small $\alpha$. Due to the restricted CFL conditions, an explicit scheme is extremely inefficient especially when $\alpha \rightarrow 0$. which motivates us to use an implicit scheme instead. By using an implicit scheme, we can conduct more numerical tests to explore more about the conservation law with the Caputo derivative.

\subsubsection{Convergence test for the implicit scheme}

In this subsection, we will test the convergence of implicit upwind scheme. All the configurations for scalar convection equation are the same as previous section. We still fix $\Delta x=0.01$ and since we expect this scheme to be unconditionally stable, we also choose $\alpha=0.2$ which is the case we cannot afford in the explicit case. For the stability test, we choose $\Delta t=0.01, 0.02, 0.04, 0.06, 0.08$ which is $O(\Delta x)$ as shown in Figure \ref{sta_conv_im} left. For the convergence test, we can now fix $\Delta t=0.01$, thanks to the unconditionally stable feature and a first order convergence in space is observed (see Figure \ref{sta_conv_im} right).  

\begin{figure}[htbp]
	\includegraphics[width=0.5\textwidth]{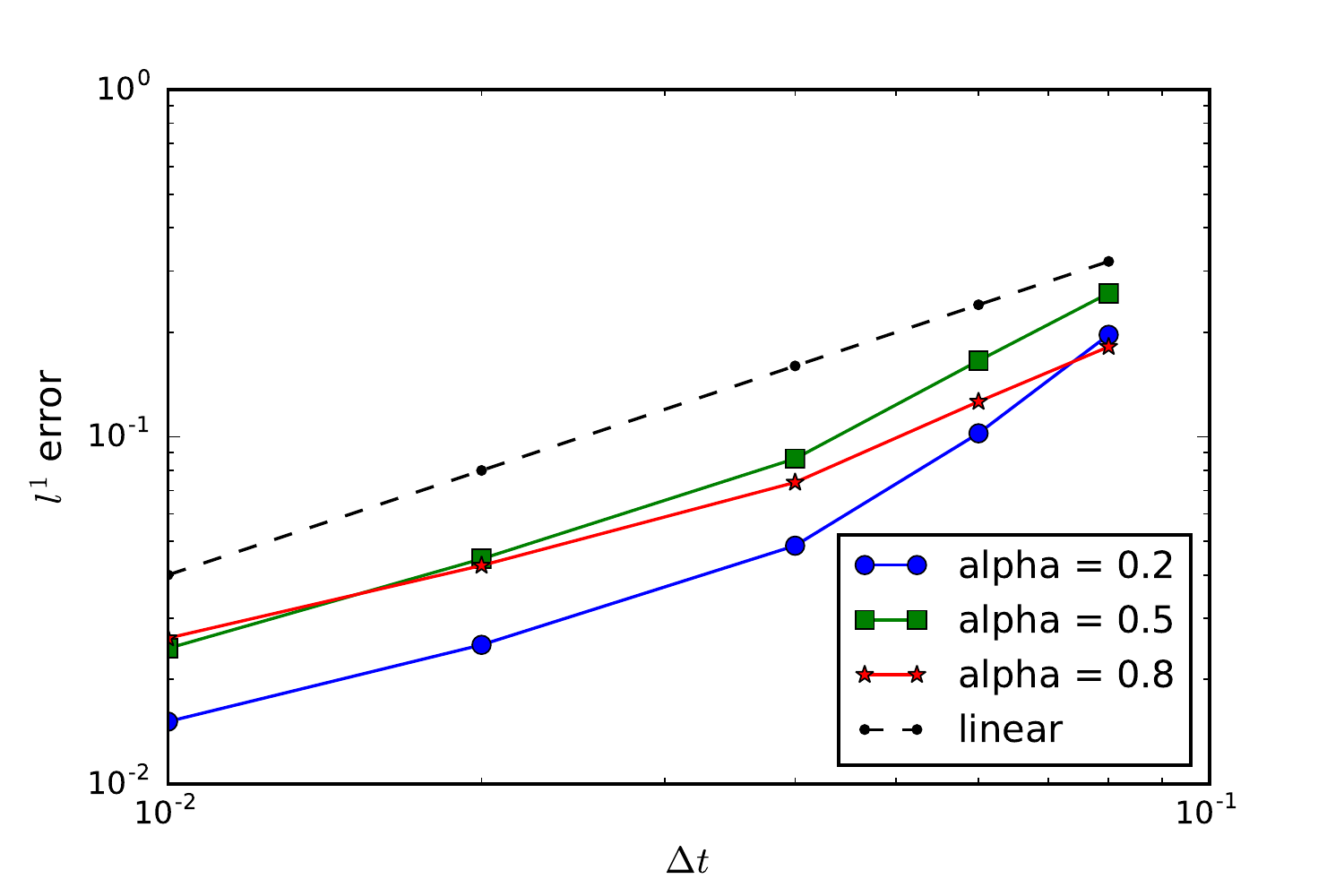}
	\includegraphics[width=0.5\textwidth]{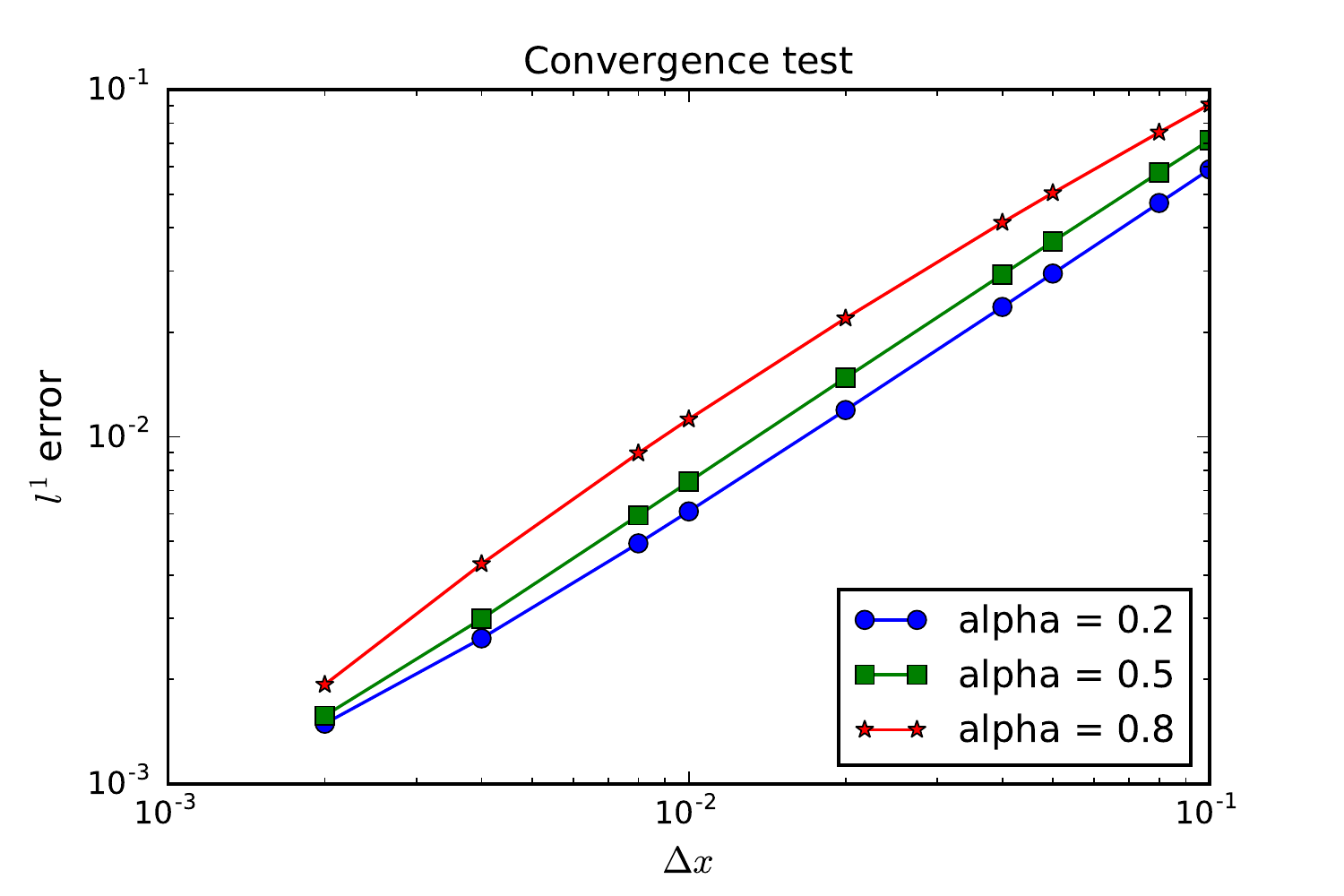}
	\caption{Implicit upwind scheme for the linear advection equation with different $\alpha$. Left: stability test. Right: convergence test in $\Delta x$.}
	\label{sta_conv_im}
\end{figure}

This constraint-free stability feature also allows us to run a test of solving nonlinear fractional equations. Here we test a Burgers' equation with the Caputo derivative:
\begin{equation}
\begin{cases}
\partial^\alpha_t u + u\partial_x u = 0, \\
u(x, 0) = -\sin(\pi x).
\end{cases}
\end{equation}
with fixed $\alpha = 0.2, 0.5, 0.8$. The same as above, for the stability test we fix $\Delta x=0.01$ and increase $\Delta t$; for the convergence test we fix $\Delta t=0.01$ and increase $\Delta x$. The results are shown in Figure \ref{sta_conv_bur}. We remark that, in the stability test, the numerical error decreases in proportion to $\Delta t$ until the spatial error becomes dominant, which explains the flat error curve for small $\Delta t$. 
 
\begin{figure}[htbp]
	\includegraphics[width=0.5\textwidth]{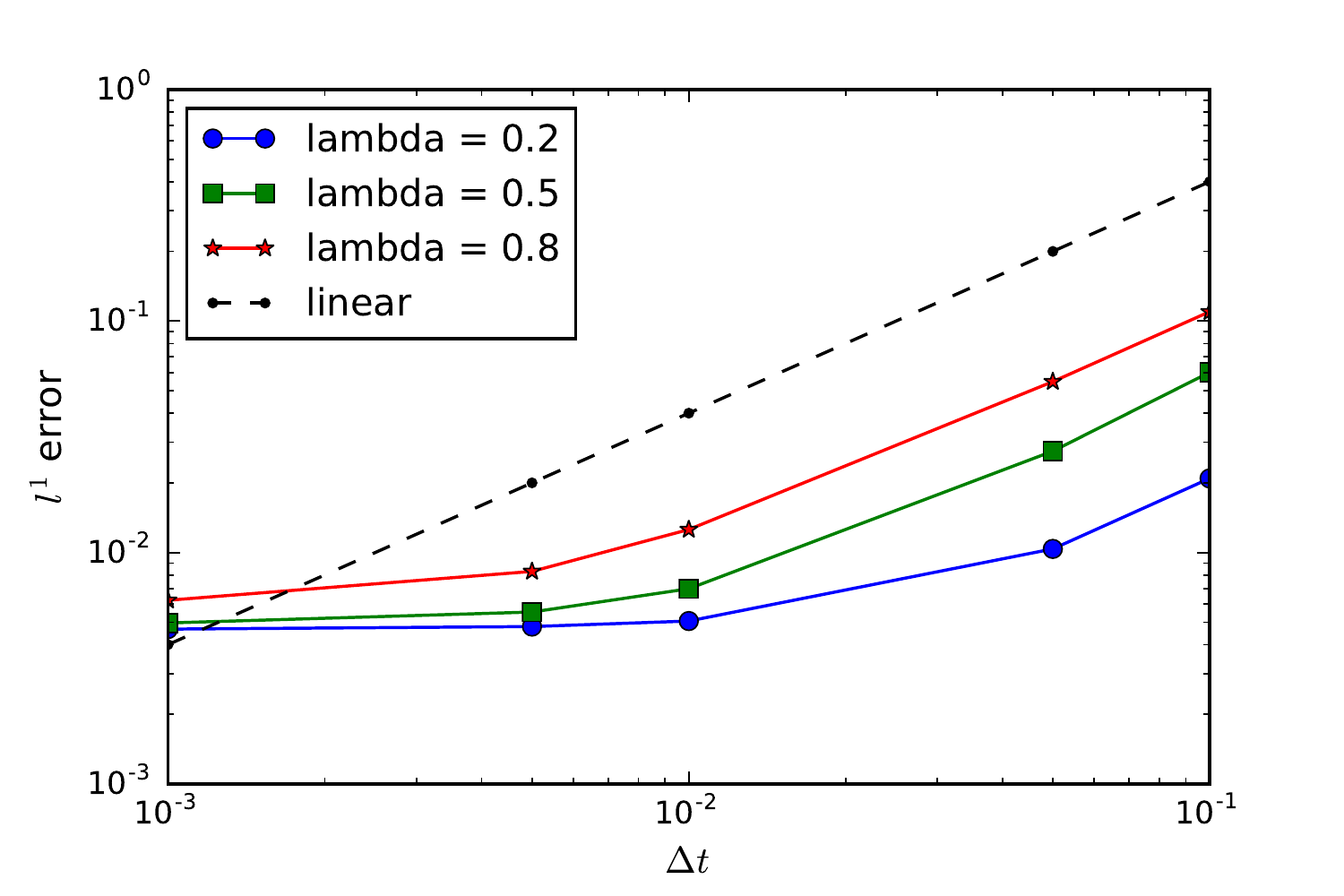}
	\includegraphics[width=0.5\textwidth]{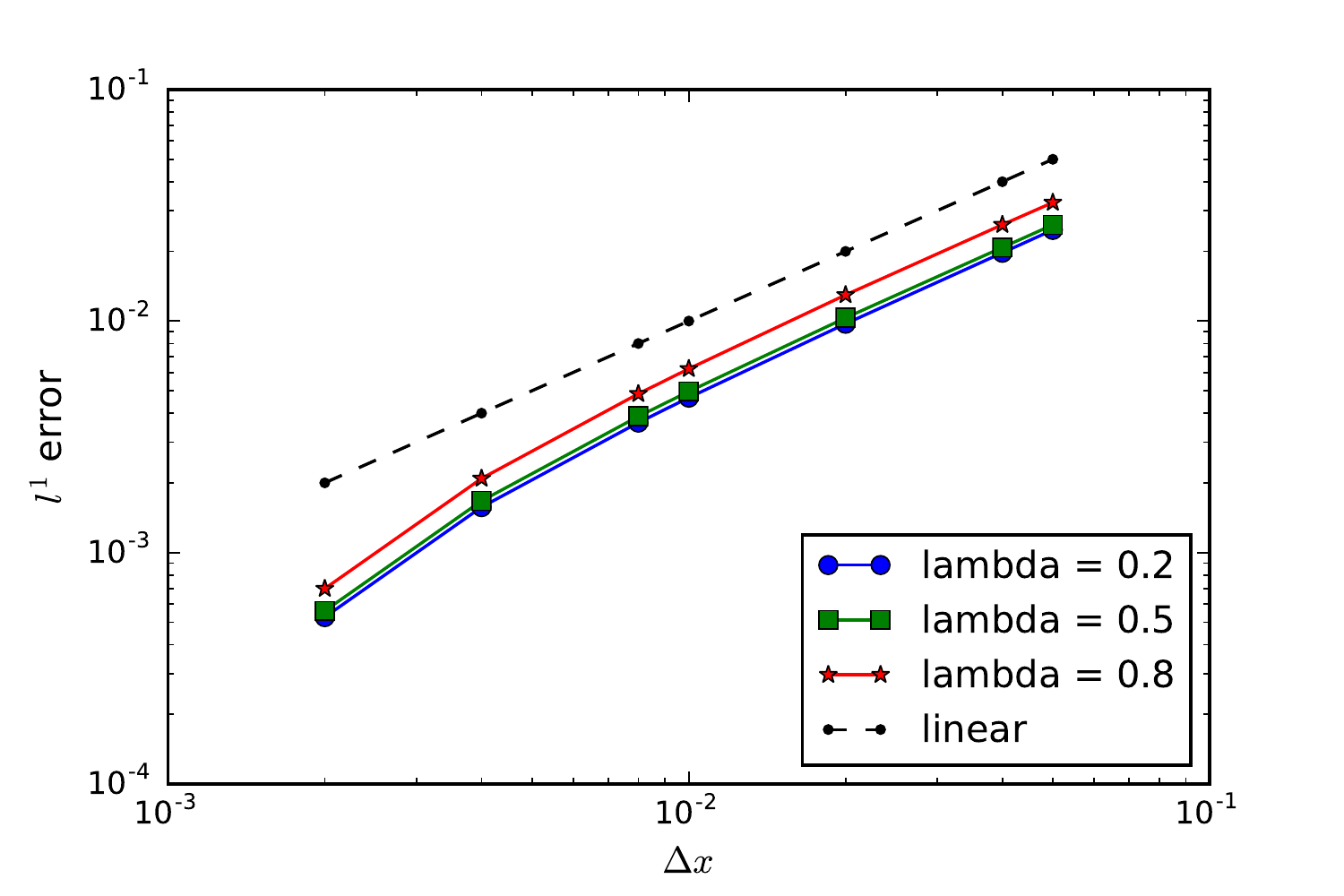}
	\caption{ Implicit upwind scheme for the Burgers' equation. Left: stability test. Right: convergence test in $\Delta x$.}
	\label{sta_conv_bur}
\end{figure}

\subsubsection{Numerical experiments towards understanding the memory effect}
As we can see, since the implicit scheme is efficient and stable for conservation laws with the Caputo derivative, we will use this scheme to investigate these equations. We will give several tests in this section. 

First we show the solutions with different $\alpha$s for the advection equation and the Burgers' equation respectively. In Figure \ref{as} left, we observe that the solutions at discontinuous point exhibit convergent behavior as $\alpha \rightarrow 1$, finally converges to the solution when $\alpha = 1$ which is the standard convection equation.
For the Burgers' equation, the same behavior is shown as in Figure \ref{as} right.
\begin{figure}[htbp]
	\includegraphics[width=0.5\textwidth]{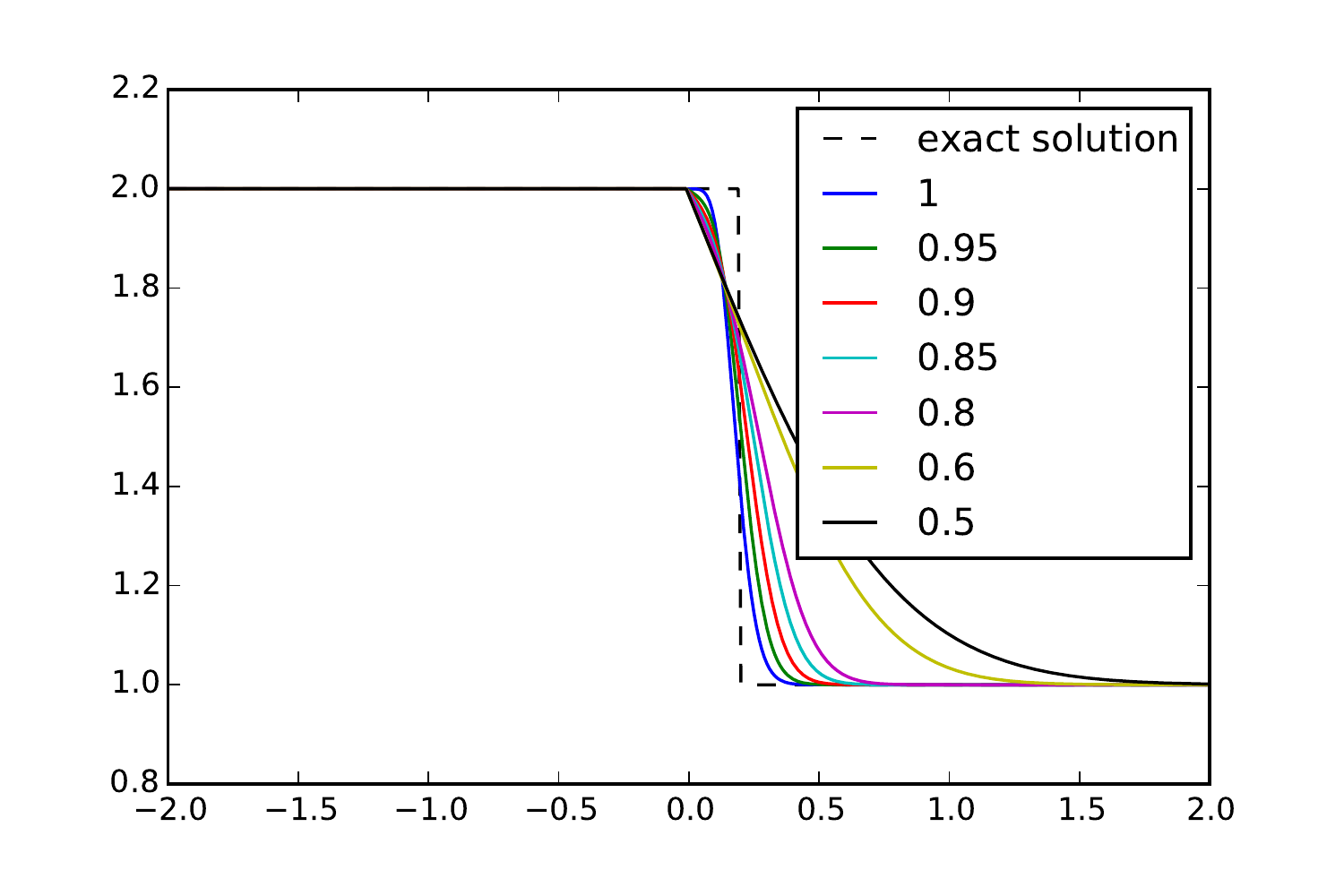}
	\includegraphics[width=0.5\textwidth]{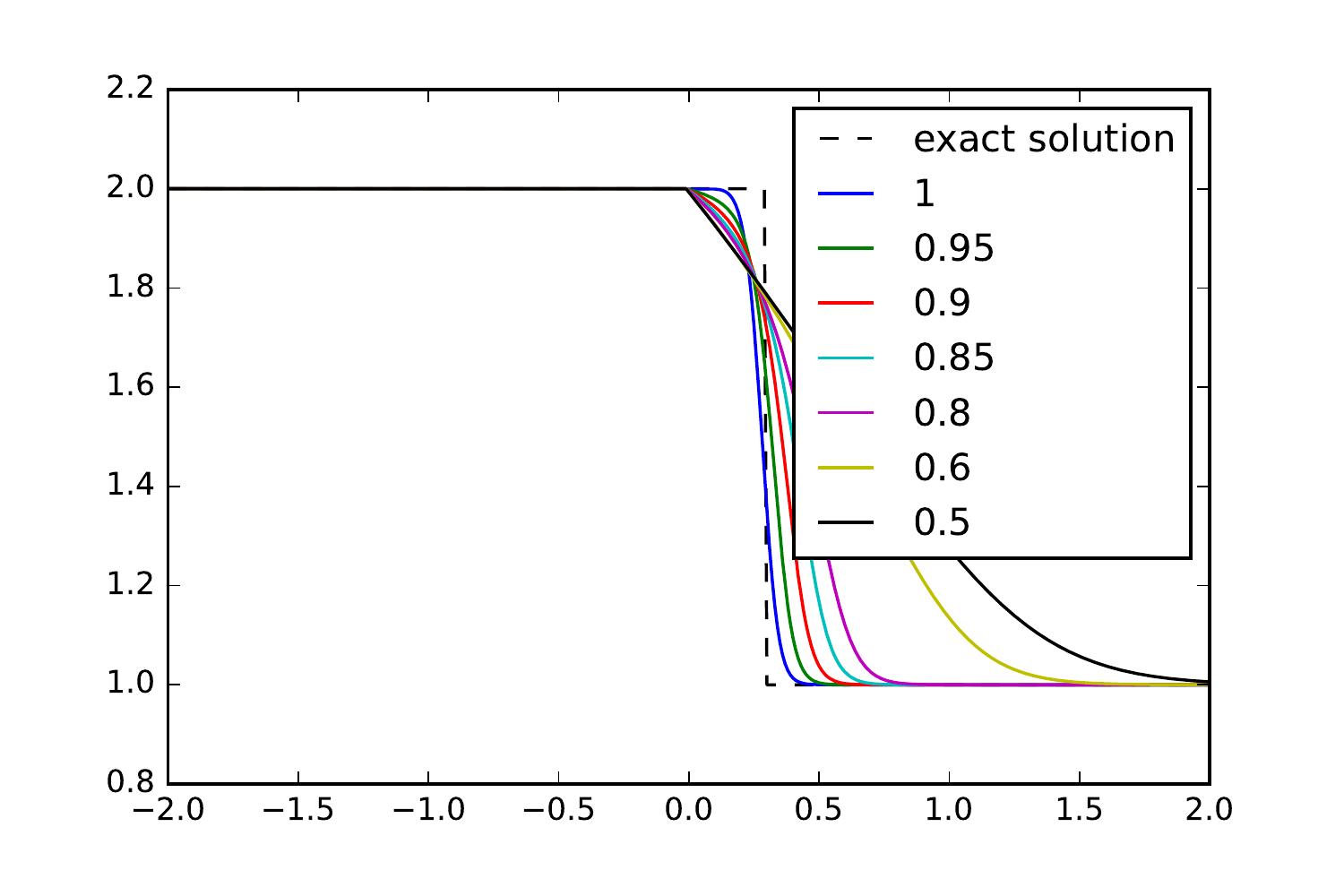}
	\caption{Solutions at $T=0.2$ with different $\alpha$, $a = 1$, $\Delta t = \Delta x = 0.01$, by the implicit upwind method. Left: linear advection equation. Right: Burgers' equation}
	\label{as}
\end{figure}

Next we consider the case of inhomogeneous memory effect, i.e., when $\alpha$ depends on $x$ and $t$. In the linear convection case, we consider 
\begin{equation}
\alpha(x, \lambda) = 1 - \lambda\exp(-30 x^2 - 7000\times (0.5)^{12}),
\end{equation}
with initial data
\begin{equation}
u(x, 0) = 
\begin{cases}
0.5\cos(\pi(2x + 4)) + 0.5,  &\quad x\in[-1.5, -0.5] \\
0, &\quad \mbox{otherwise}
\end{cases}
\end{equation}
In Figure \ref{1}, \ref{2}, \ref{3}, we show snapshots of solutions with different $\alpha(x, t)$ functions. We observe a vertical suppress and horizontal spread of the profile due to the memory effect.
\begin{figure}[htbp]
	\includegraphics[width=0.5\textwidth]{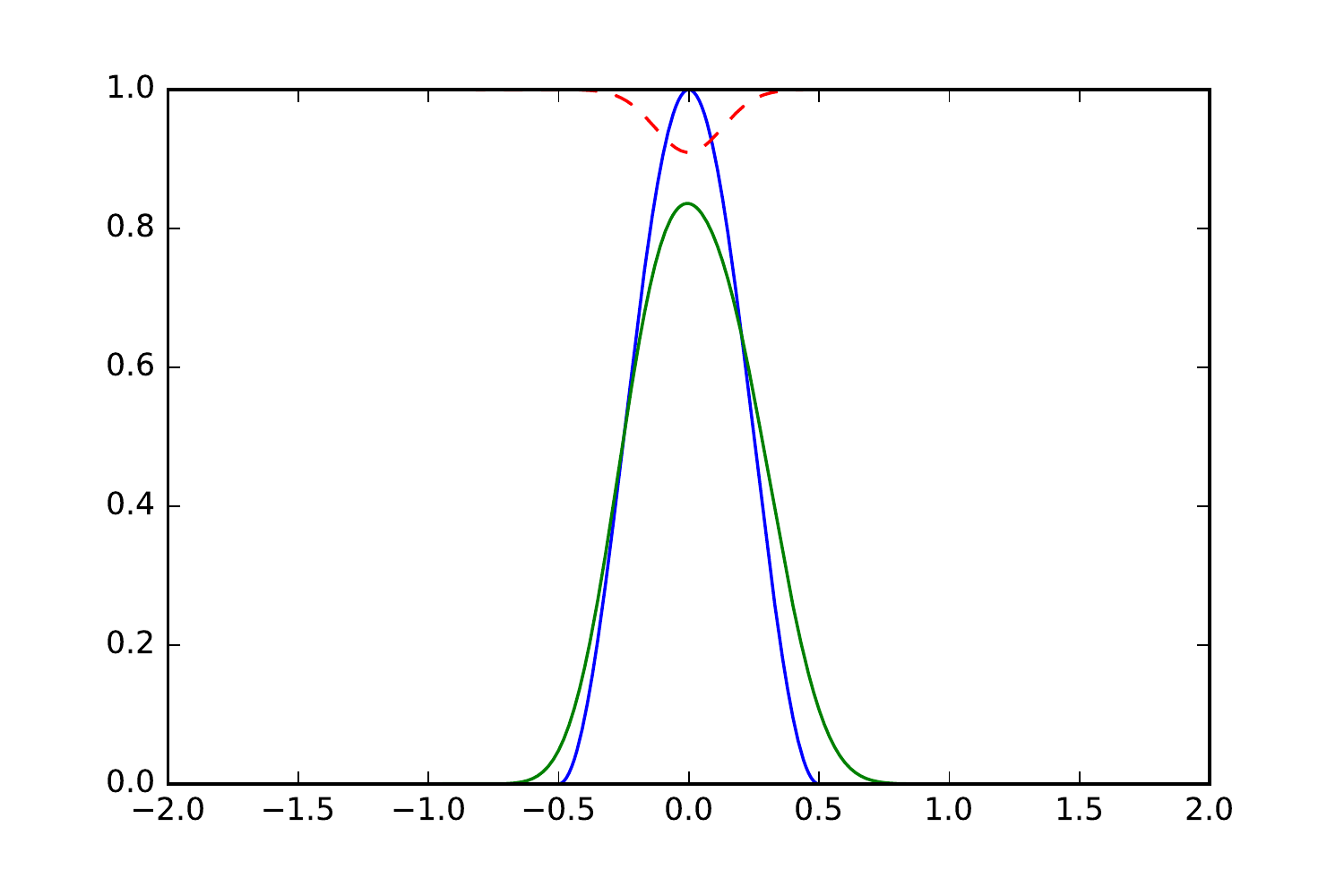}
	\includegraphics[width=0.5\textwidth]{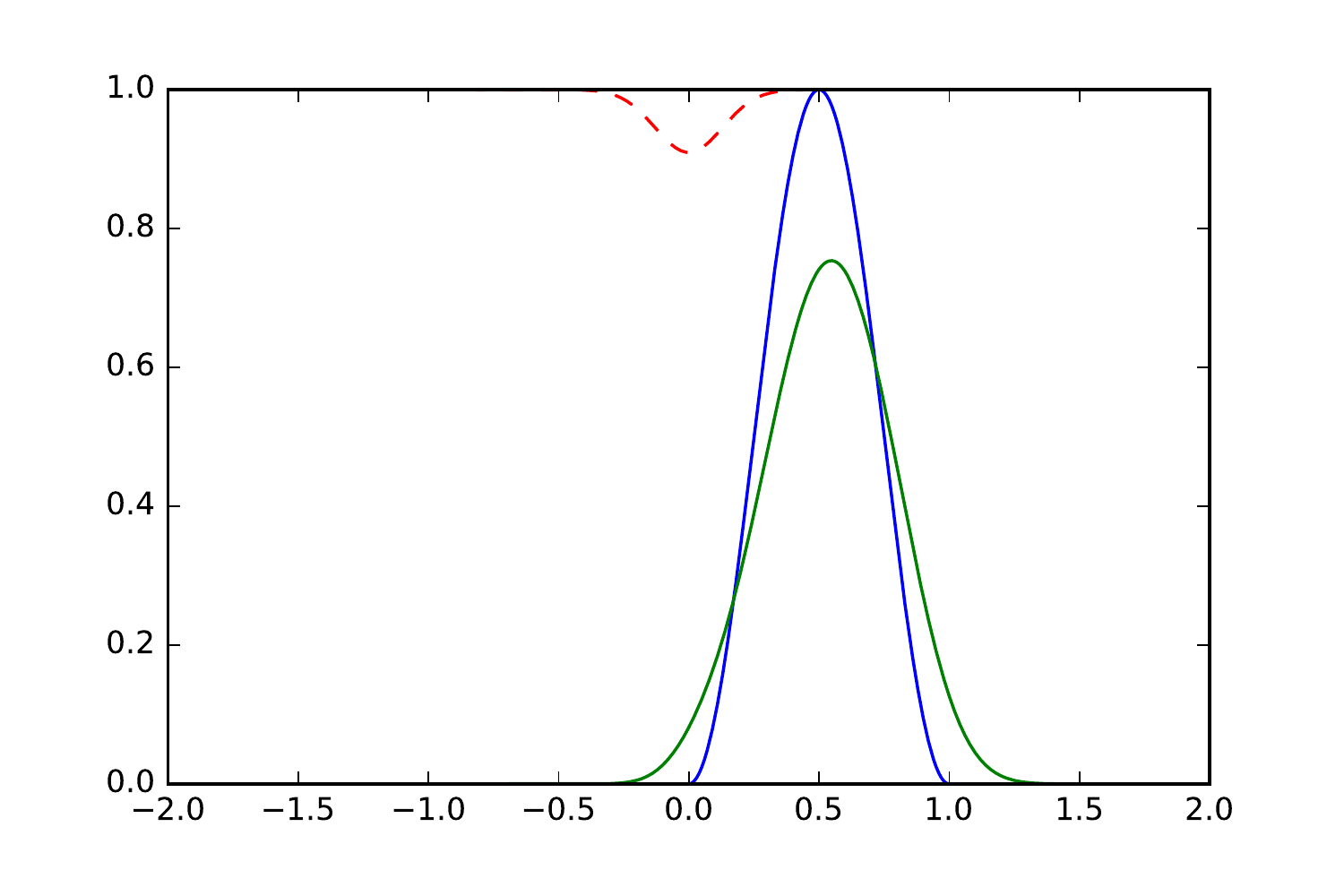}
	\caption{Left: solution (green line) at $T = 1$ with $\lambda = 0.5$ (dash line) and exact solution with $T = 1$, $\alpha = 1$ (blue line). Right: solution (green line) at $T = 1.5$ with $\lambda = 0.5$ (dash line) and exact solution with $T = 1.5$, $\alpha = 1$ (blue line).}
	\label{1}
\end{figure}
\begin{figure}[htbp]
	\includegraphics[width=0.5\textwidth]{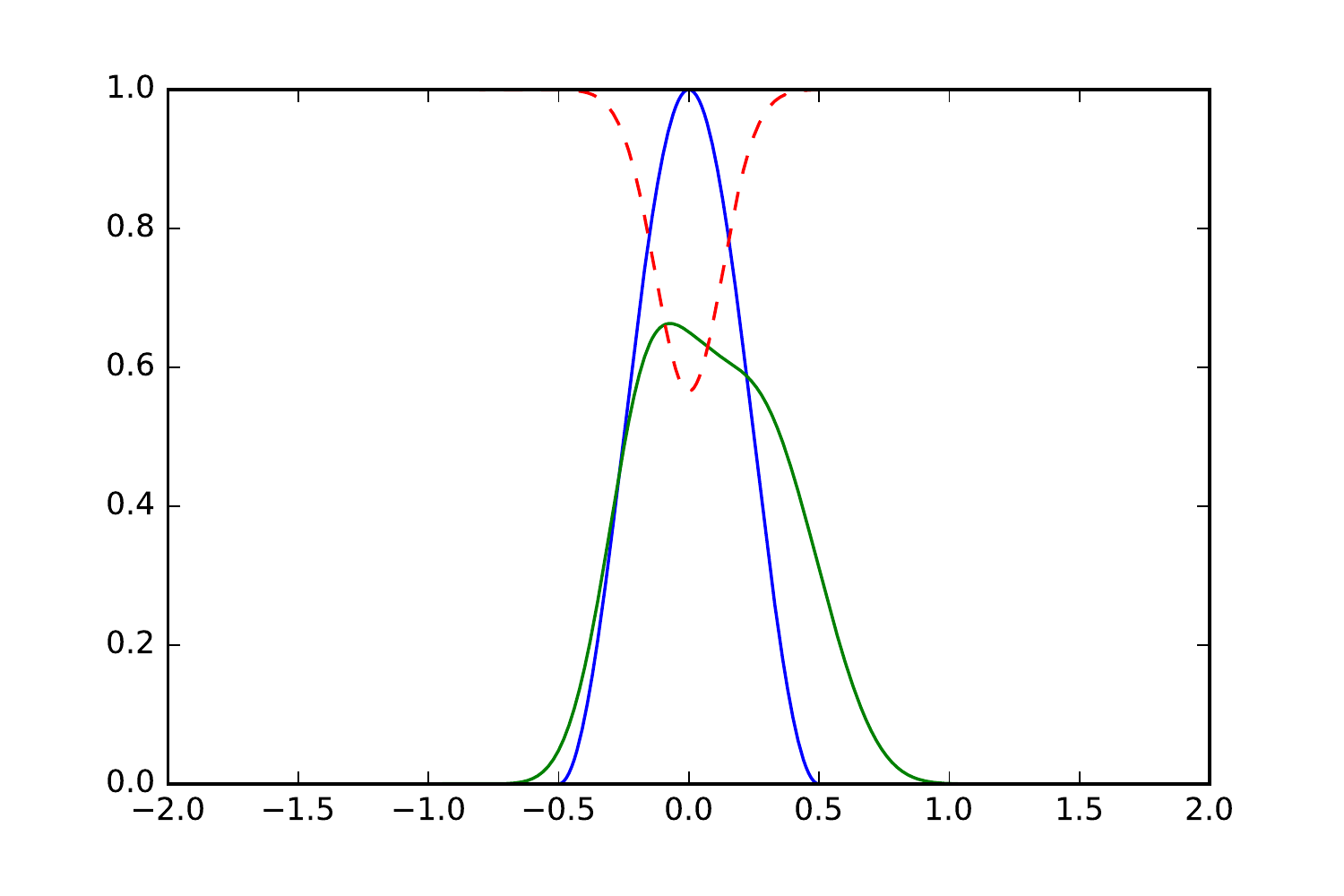}
	\includegraphics[width=0.5\textwidth]{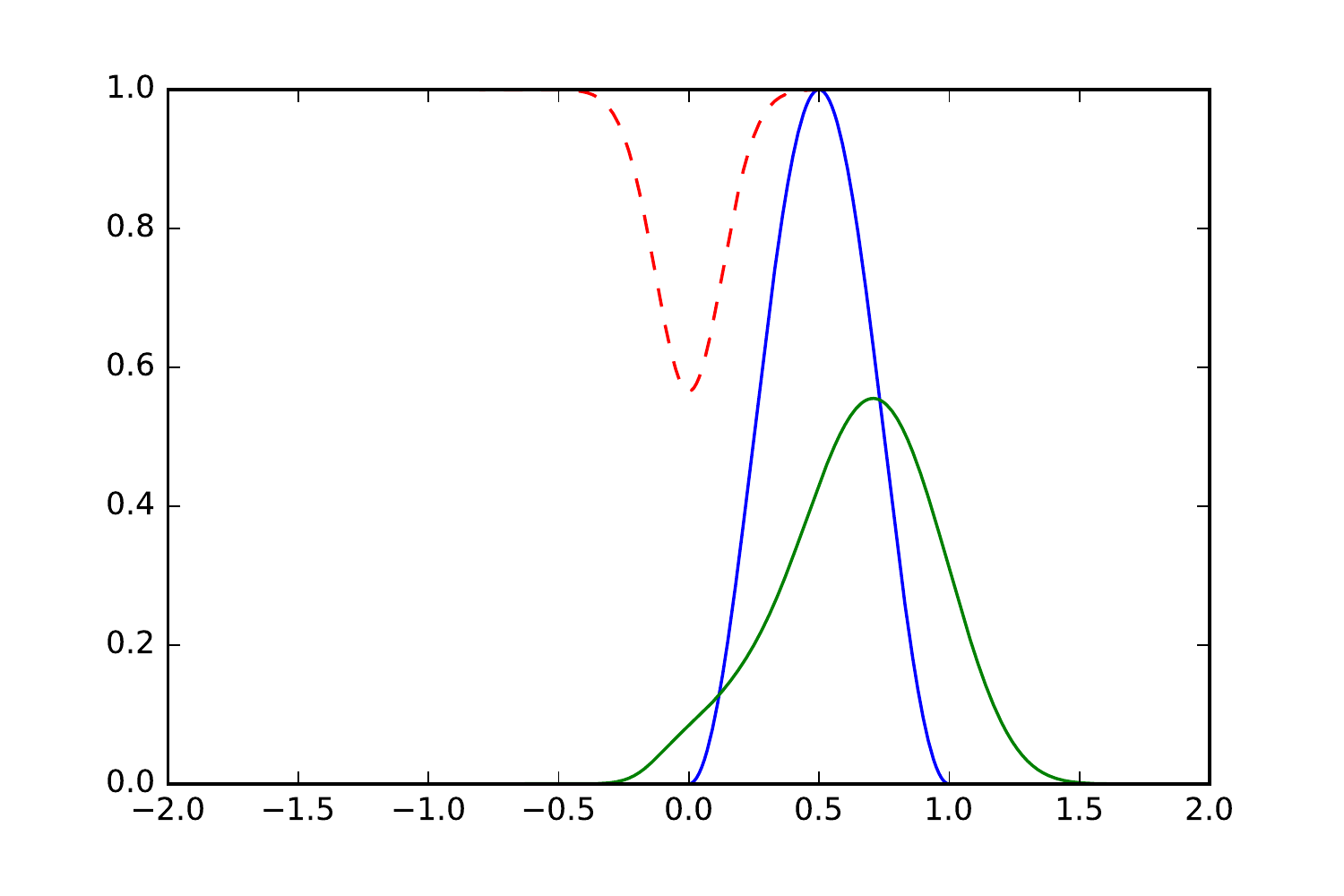}
	\caption{Left: solution (green line) at $T = 1$ with $\lambda = 2.4$ (dash line) and exact solution with $T = 1$, $\alpha = 1$ (blue line). Right: solution (green line) at $T = 1.5$ with $\lambda = 2.4$ (dash line) and exact solution with $T = 1.5$, $\alpha = 1$ (blue line).}
	\label{2}
\end{figure}
\begin{figure}[htbp]
	\includegraphics[width=0.5\textwidth]{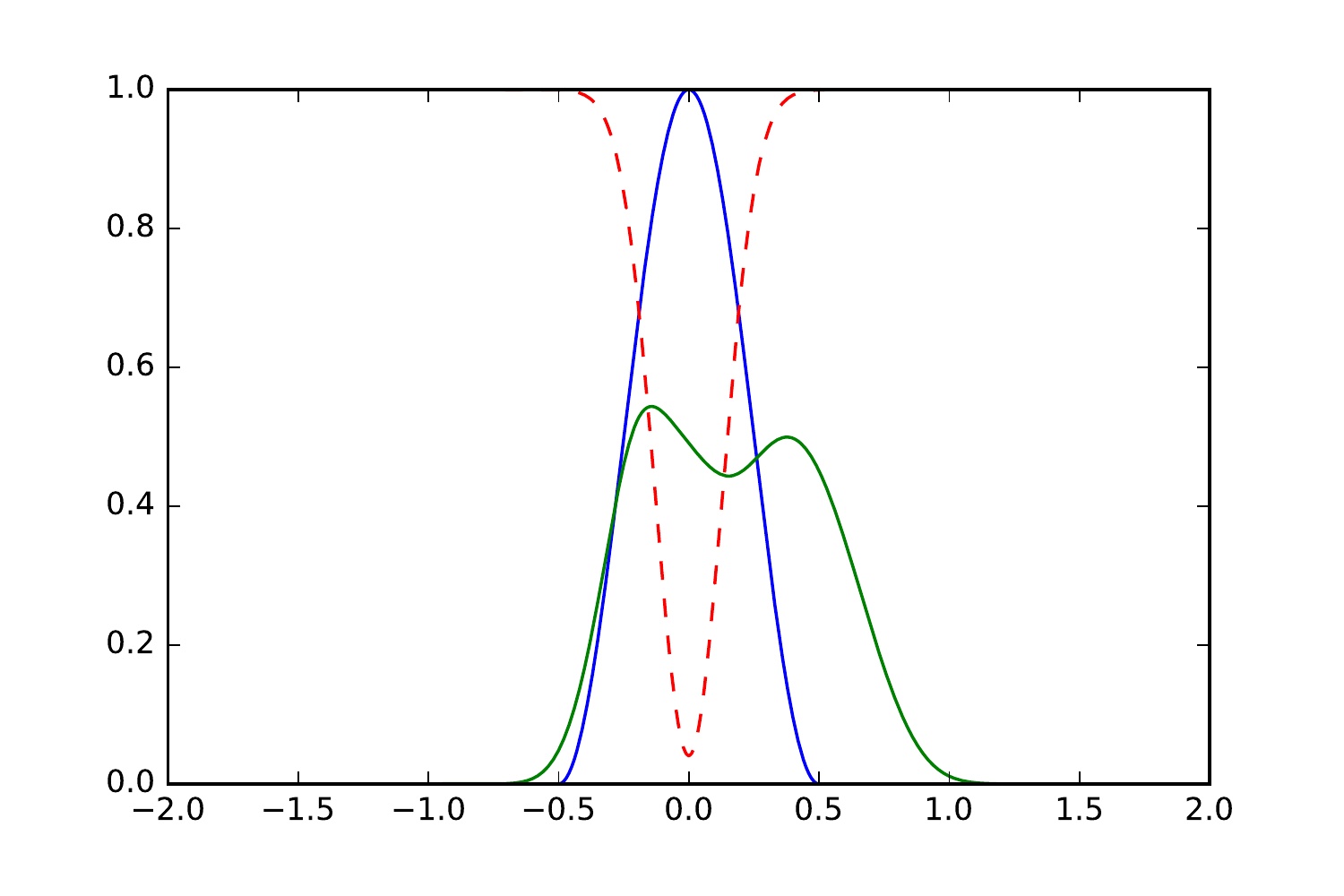}
	\includegraphics[width=0.5\textwidth]{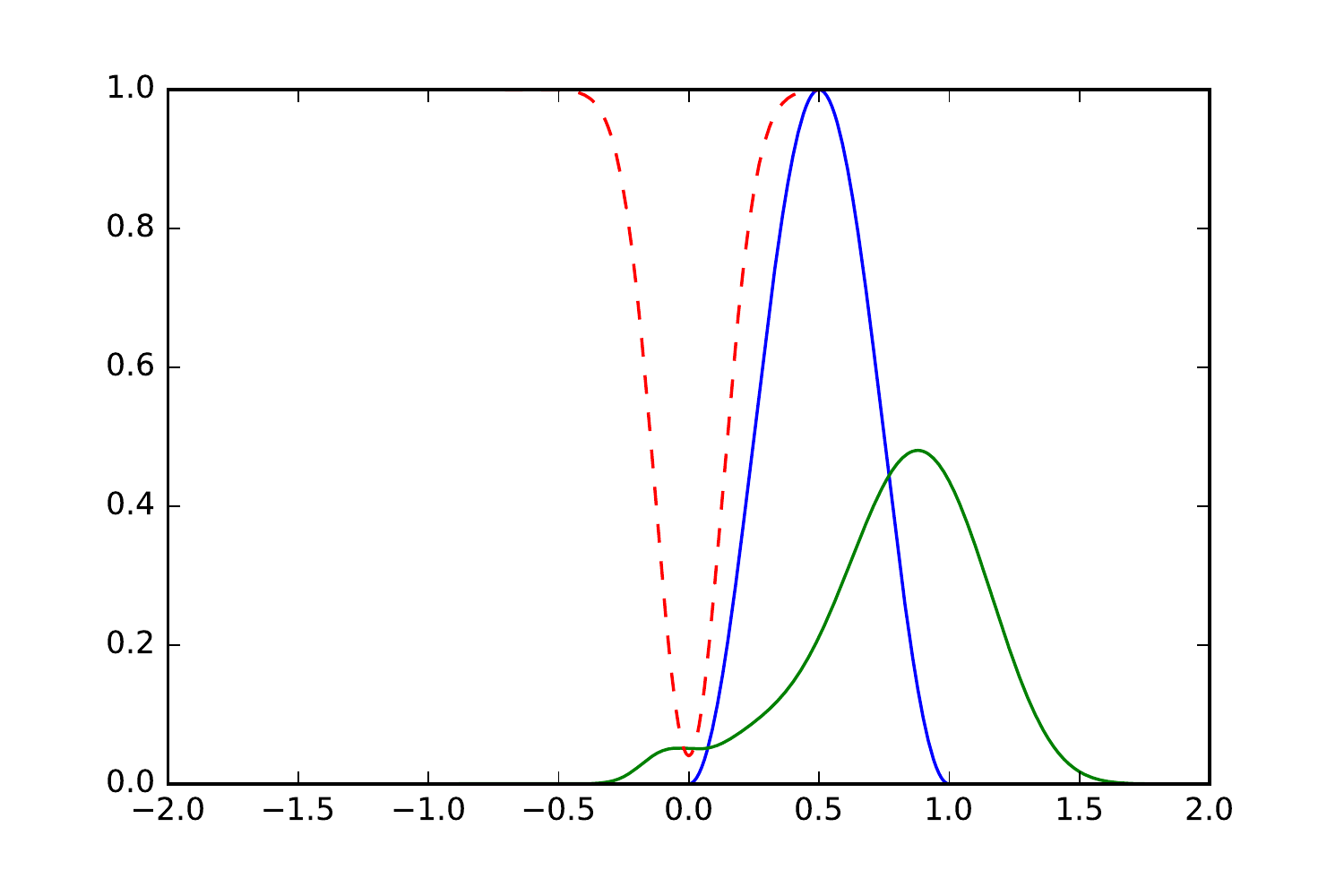}
	\caption{Left: solution (green line) at $T = 1$ with $\lambda = 5.3$ (dash line) and exact solution with $T = 1$, $\alpha = 1$ (blue line). Right: solution (green line) at $T = 1.5$ with $\lambda = 5.3$ (dash line) and exact solution with $T = 1.5$, $\alpha = 1$ (blue line).}
	\label{3}
\end{figure}

Finally, we consider the Burgers' equation
\begin{equation}
\begin{cases}
\partial^\alpha_t u + u\partial_x u = 0, \\
u(x, 0) = -\sin(\pi x).
\end{cases}
\end{equation}
where $\alpha_1(x, t) = 1 - 0.9\exp(-8|x| - 7000(t - 0.8)^{12})$ and $\alpha_2(x, t) = 1$. 

This is the same example as in Karniadakis' paper \cite{Karniadakis:2013coba}. However, in their paper, they got a solution with Gibbs phenomenon at the discontinuity point. Here, our results are free of oscillation and also the damping effect can be observed. See Figure \ref{bur_1} and Figure \ref{bur_2}.
\begin{figure}[htbp]
	\includegraphics[width=0.5\textwidth]{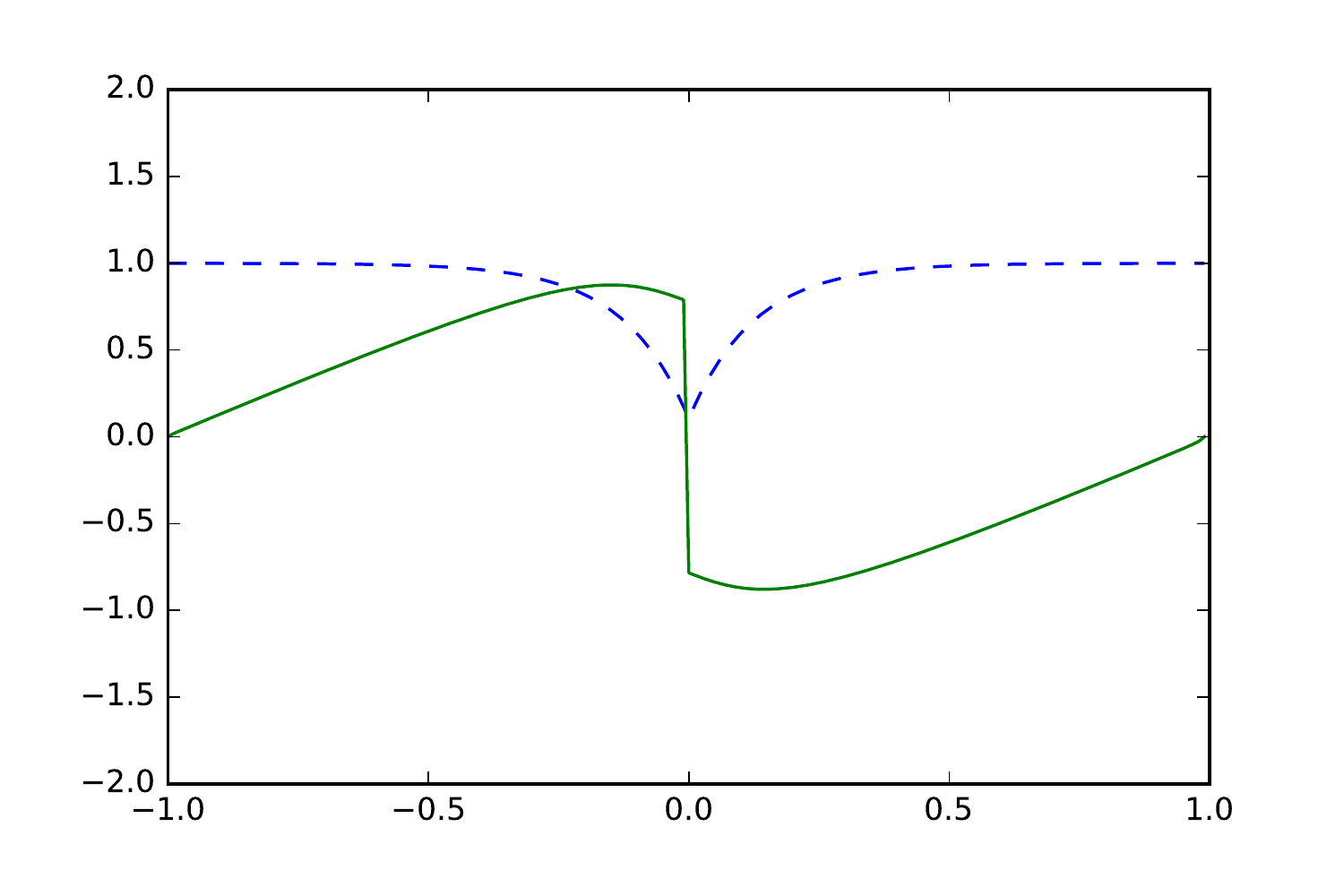}
	\includegraphics[width=0.5\textwidth]{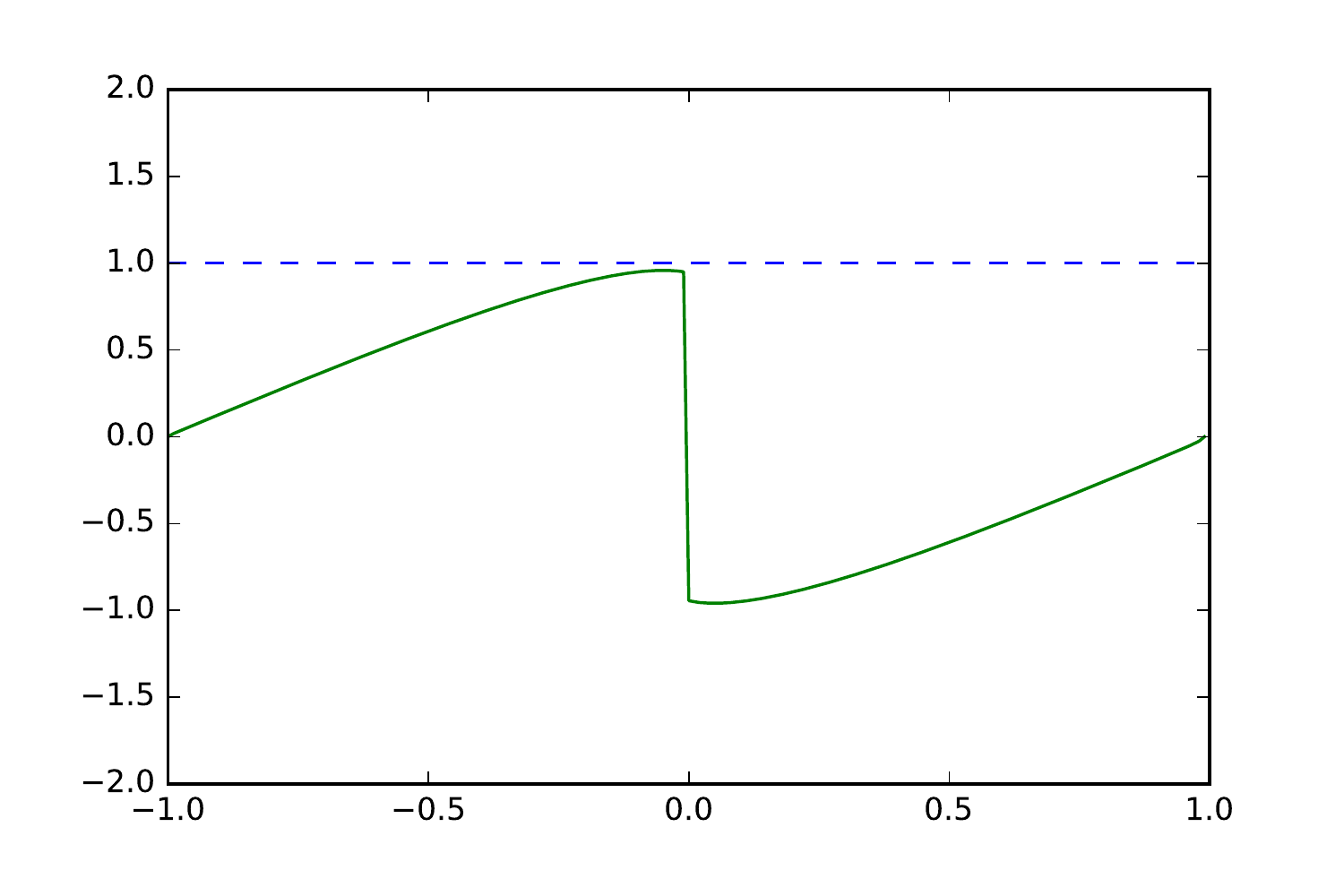}
	\caption{Left: solution (solid line) at $T=0.5$ with $\alpha = \alpha_1(x,t)$ (dash line), $\Delta t = \Delta x = 0.01$. Right: solution (solid line) at $T=0.5$ with $\alpha = 1$, $\Delta t = \Delta x = 0.01$ by using the implicit upwind method with fast sweeping.}
	\label{bur_1}
\end{figure}
\begin{figure}[tbp]
	\centering
	\includegraphics[width=0.8\textwidth]{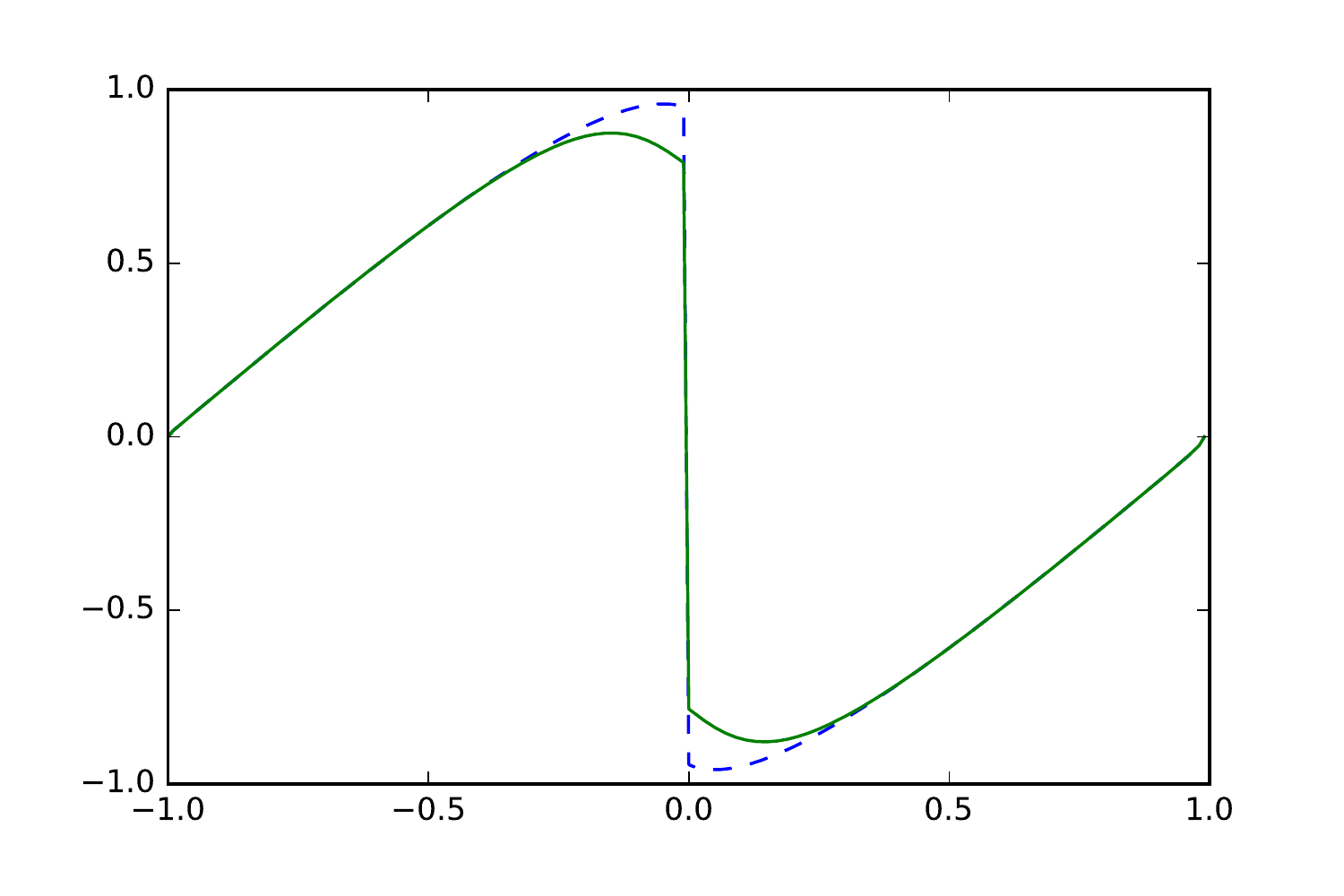}
	\caption{Solution (green solid line) at $T=0.5$ with $\alpha = \alpha_1(x,t)$ and solution (blue dash line) at $T=0.5$ with $\alpha = 1$, $\Delta t = \Delta x = 0.01$ by using the implicit upwind method with fast sweeping.}
	\label{bur_2}
\end{figure}

\section*{Acknowledgments}
The authors would like to thank Jianfeng Lu for helpful discussions. J. Liu is partially supported by KI-Net NSF RNMS grant No. 1107291 and NSF grant DMS 1514826. Z. Zhou is partially supported by RNMS11-07444 (KI-Net).  Z. Ma is partially supported by the NSF grant DMS--1522184, DMS--1107291: RNMS (KI-Net) and Natural Science Foundation of China grant 91330203.

\bibliography{frac_bibtex}
\end{document}